\newtheorem{theorem}{Theorem}[section]
\newtheorem{lemma}[theorem]{Lemma}
\newtheorem{cor}{Corollary}
\newtheorem{proposition}[theorem]{Proposition}
\theoremstyle{definition}
\newtheorem{definition}[theorem]{Definition}
\theoremstyle{remark}
\newtheorem{remark}[theorem]{Remark}
\newcommand{\comp}{\mathbb{C}^n}
\newcommand{\mpp}{M^{\perp}}
\newcommand{\npp}{N^{\perp}}
\newcommand{\conv}{\operatorname{Conv}}
\newcommand*\colvec[3][]{
    \begin{bmatrix}\ifx\relax#1\relax\else#1\\\fi#2\\#3\end{bmatrix}
}
\title{Jordan Plane and Numerical Range of Operators involving Two Projections}
\numberwithin{equation}{section}
\begin{document}

\title{Jordan Plane and Numerical Range of Operators involving Two Projections}

\author{Jaedeok Kim}
\address{Department of Mathematical, Computing, and Information Sciences, Jacksonville State University, Jacksonville, Alabama 36265}
\email{jkim@jsu.edu}

\author{Youngmi Kim}
\address{Department of Mathematical, Computing, and Information Sciences, Jacksonville State University, Jacksonville, Alabama 36265}
\email{ykim@jsu.edu}

\subjclass[2010]{Primary 15A60; Secondary 47A12}

\date{July 8, 2018 }


\keywords{Orthogonal projections, principal angles, numerical range}

\begin{abstract}
We use principal angles between two subspaces to define Jordan planes. Jordan planes provide an optimal way to decompose $\comp$ in relation to given two subspaces. We apply Jordan planes to show that two pairs of of subspaces $(M,N)$ and $(M^{\perp},N^{\perp})$ are unitarily equivalent if $M$ and $N$ are subspaces of $\comp$ in generic position. We compute numerical ranges of sum and product of two orthogonal projections by using Jordan planes.
\end{abstract}

\maketitle

\section{Principal Angles}
In 1875, C. Jordan \cite{Jordan} introduced the principal angles between two subspaces. Since then, many attempts have been made to develop and derive recursive definitions of principal angles. The principal angles along with principal vectors provide optimal approaches in describing many properties involving two subspaces. A great deal of discussion on principal angles was made by A. Galantai and C.J.Hegedus in \cite{Galantai3}. \\

\begin{definition}
    Let $M$ and $N$ be two subspaces of $\comp$ with $\dim M=p$ and $\dim N =q$. Assume $p \geq q$. The \emph{principal angles $\theta_1, \theta_2, \cdots \, \theta_q$ between $M$ and $N$} can be constructed recursively as follows.
       \begin{align*}
      \cos \theta_i =& \sup \{|\langle u, v \rangle| \, : \, u \in M \cap M_{i-1}^{\perp} , v \in N \cap N_{i-1}^{\perp} \, , \|u\|=\|v\|=1\} \\
                    =& \langle u_i, v_i \rangle \,\, \textrm{for some unit vectors} \, u_i \in M \cap M_{i-1}^{\perp}\, \textrm{and} \, v_i \in N \cap N_{i-1}^{\perp},\, \\
                     & \textrm{for} \,\ i=1,\cdots,q
    \end{align*}
    where $M_{i-1}$ and $N_{i-1}$ are subspaces spanned by $\{u_1,\cdots,u_{i-1}\}$ and $\{v_1,\cdots,v_{i-1}\}$, respectively, and $M_0=N_0=\{0\}$.
\end{definition}

  The vectors $u_1,u_2, \cdots , u_q$ and $v_1,v_2, \cdots , v_q$ are called \emph{the principal vectors}. The principal angles are uniquely determined for a pair of subspaces, but the principal vectors are not. Some properties associated with the principal angles follow from the construction.
  \begin{enumerate}
    \item $0 \leq \theta_1 \leq \theta_2 \leq  \cdots \leq \theta_q \leq \frac{\pi}{2} $.
    \item $\theta_1 =0 $ if and only if $M \cap N \neq \{0\}$. More specifically, if $\dim (M\cap N) =r$, then $\theta_1=\cdots=\theta_r=0$.
    \item $\theta_q=\frac{\pi}{2}$ if and only if $(M \cap N^{\perp}) \oplus (M^{\perp} \cap N) \neq \{0\}$. 
 
  \end{enumerate}

The following biorthogonality relation of principal vectors is worthwhile to note. The relation will play a vital role in computing numerical range of product of orthogonal projections. A proof of the lemma can be found in \cite{Galantai3}. 

\begin{lemma}\label{Biortho}
  Assume that $u_1,u_2, \cdots , u_q$ and $v_1,v_2, \cdots , v_q$ are principal vectors obtained in the recursive definition of principal angles, $\theta_1, \theta_2, \cdots \, \theta_q$, of two subspaces $M$ and $N$. Then
  $$\langle u_i, v_j \rangle = \delta_{ij}\cos \theta_i \,\, \textrm{for} \,\, i,j =1,2,\cdots ,q.$$
\end{lemma}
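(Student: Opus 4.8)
The plan is to establish the identity in two parts: the diagonal entries follow directly from the recursive definition, while the vanishing of the off-diagonal entries is the real content of the lemma. For the diagonal, the construction selects $u_i$ and $v_i$ precisely so that $\langle u_i, v_i \rangle = \cos \theta_i$, so nothing needs to be shown there. The remaining task is to prove $\langle u_i, v_j \rangle = 0$ whenever $i \neq j$.

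First I would record the orthogonality internal to each family. Since $u_i \in M \cap M_{i-1}^{\perp}$, the vector $u_i$ is orthogonal to $u_1, \dots, u_{i-1}$, and likewise $v_i \perp v_1, \dots, v_{i-1}$; hence $\{u_1, \dots, u_q\}$ and $\{v_1, \dots, v_q\}$ are each orthonormal systems. The key auxiliary claim I would then isolate is: for each fixed $i$, the vector $u_i$ is orthogonal to every $v \in N \cap N_{i-1}^{\perp}$ satisfying $v \perp v_i$. Granting this claim, the lemma follows quickly. For $j > i$ we have $v_j \in N \cap N_{j-1}^{\perp} \subseteq N \cap N_{i-1}^{\perp}$ (as $i-1 \le j-1$) and $v_j \perp v_i$ (since $v_i \in N_{j-1}$), so the claim gives $\langle u_i, v_j \rangle = 0$. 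The symmetric claim, with the roles of $M$ and $N$ interchanged and applied at index $j$, handles the case $i > j$: it shows $v_j \perp u$ for every $u \in M \cap M_{j-1}^{\perp}$ with $u \perp u_j$, and $u_i$ is such a vector when $i > j$.

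To prove the auxiliary claim I would use a first-order optimality argument. Fix $u_i$ and consider $v \mapsto |\langle u_i, v \rangle|$ on the unit sphere of $N \cap N_{i-1}^{\perp}$; by the definition of $\theta_i$ this function attains its maximum $\cos \theta_i$ at $v = v_i$. If $\cos \theta_i = 0$ the function is identically zero on the subspace and the claim is immediate. If $\cos \theta_i > 0$, take any $w \in N \cap N_{i-1}^{\perp}$ with $w \perp v_i$ and form the admissible curve $v(t) = (v_i + tw)/\|v_i + tw\|$ for small real $t$. Using $\|v_i + tw\|^2 = 1 + t^2 \|w\|^2$, a short expansion gives $|\langle u_i, v(t) \rangle|^2 = \cos^2 \theta_i + 2t \cos \theta_i \, \mathrm{Re}\langle u_i, w \rangle + O(t^2)$, so maximality at $t = 0$ forces $\mathrm{Re}\langle u_i, w \rangle = 0$; replacing $w$ by $iw$ (also admissible) forces $\mathrm{Im}\langle u_i, w \rangle = 0$, whence $\langle u_i, w \rangle = 0$.

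The main obstacle I anticipate is bookkeeping of the complex phases: because the supremum is over the modulus $|\langle u, v \rangle|$ rather than the inner product itself, one must perturb by both $w$ and $iw$ to conclude that the full complex inner product vanishes, and one must keep the normalization $\|v_i + tw\|^2 = 1 + t^2\|w\|^2$ so that the denominator only contributes at second order. The degenerate case $\cos \theta_i = 0$ (equivalently $\theta_i = \pi/2$) needs separate, though trivial, treatment since the perturbation step divides by $\cos \theta_i$. Once these points are settled, combining the diagonal computation with the two symmetric instances of the auxiliary claim yields $\langle u_i, v_j \rangle = \delta_{ij} \cos \theta_i$ for all $i, j$.
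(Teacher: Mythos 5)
Your proof is correct. For comparison: the paper does not prove this lemma at all --- it defers to Galantai and Hegedus \cite{Galantai3} --- so your argument is a genuine self-contained addition rather than a variant of an internal proof. Your route works directly from the recursive sup definition: the diagonal case is the construction itself, and the off-diagonal case follows from a first-order optimality condition at the maximizer, with the two delicate points handled correctly --- you perturb along both $w$ and $iw$ to kill the full complex inner product (the objective is a modulus, so a single direction only controls the real part), and you separate out the degenerate case $\cos\theta_i=0$, where the conclusion is immediate because the supremum itself vanishes. The reduction of both off-diagonal cases ($j>i$ and $i>j$) to the auxiliary claim and its $M$/$N$-symmetric twin is sound, since $N_{i-1}^{\perp} \supseteq N_{j-1}^{\perp}$ and $v_i \in N_{j-1}$ when $i<j$, and symmetrically for the $u$'s; the one-variable maximality you invoke (that $v_i$ maximizes $v \mapsto |\langle u_i, v \rangle|$ over the unit sphere of $N \cap N_{i-1}^{\perp}$) is justified because that restricted supremum is bounded above by the joint supremum $\cos\theta_i$, which is attained at $v_i$ --- a half-line worth stating explicitly. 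The alternative route, common in the literature and essentially the one in \cite{Galantai3}, identifies the principal cosines and vectors with the singular values and singular vectors of the cross-Gram matrix of orthonormal bases of $M$ and $N$, from which biorthogonality is inherited from the orthogonality of singular vectors; your argument is more elementary in that it never leaves the recursive definition the paper actually uses, at the cost of the calculus-style perturbation bookkeeping.
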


Among all the principal angles, two of them assume more importance. $\theta_{r+1}$ is referred to as the \emph{Friedrichs angle} $\alpha(M,N)$ between $M$ and $N$, and $\theta_1$ is referred to as the \emph{Dixmier angle} $\alpha_0(M,N)$ between $M$ and $N$. Note that for each $k$ with $\theta_k \in (0, \frac{\pi}{2})$, there exist two nonparallel and nonperpendicular vectors $u_k \in M$ and $v_k \in N$.

The collection of principal vectors provides a real nice way of representing one subspace in terms of the other. Readers may refer to \cite{Galantai3} for the proof of the following lemma. 

\begin{lemma}\label{TwoProjs}
Assume that $M$ and $N$ are two subspaces of $\comp$ and $P_M$ and $P_N$ are orthogonal projections onto $M$ and $N$, respectively. If $u_1,u_2, \cdots , u_q$ and $v_1,v_2, \cdots , v_q$ are principal vectors corresponding to principal angles   
, $\theta_1, \theta_2, \cdots \, \theta_q$, of two subspaces $M$ and $N$, then
  $$P_Mv_j=\cos \theta_j u_j \quad \textrm{and} \quad P_Nu_j=\cos \theta_j v_j \quad \textrm{for} \quad j=1,2,\cdots, q.$$ 
\end{lemma}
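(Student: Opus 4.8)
The plan is to prove the two identities $P_M v_j = \cos\theta_j\, u_j$ and $P_N u_j = \cos\theta_j\, v_j$ by exploiting the variational characterization of the principal angle and the biorthogonality relation of Lemma~\ref{Biortho}. By symmetry of the two statements (interchanging the roles of $M,N$ and $u_j,v_j$), it suffices to establish $P_M v_j = \cos\theta_j\, u_j$; the companion identity follows verbatim.

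First I would fix $j$ and expand $P_M v_j$ in a convenient orthonormal basis of $M$. The principal vectors $u_1,\dots,u_q$ are orthonormal (this follows from the recursive construction, since at step $i$ the vector $u_i$ is drawn from $M\cap M_{i-1}^{\perp}$, hence is orthogonal to $u_1,\dots,u_{i-1}$); extend them to an orthonormal basis $\{u_1,\dots,u_q,w_1,\dots,w_{p-q}\}$ of $M$. Then I would write
\begin{equation*}
    P_M v_j = \sum_{i=1}^{q} \langle v_j, u_i \rangle\, u_i + \sum_{k=1}^{p-q} \langle v_j, w_k \rangle\, w_k.
\end{equation*}
By Lemma~\ref{Biortho} we have $\langle v_j, u_i \rangle = \overline{\langle u_i, v_j\rangle} = \delta_{ij}\cos\theta_i$, so the first sum collapses to exactly $\cos\theta_j\, u_j$. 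The whole proof therefore reduces to showing that every coefficient in the second sum vanishes, i.e. $\langle v_j, w_k \rangle = 0$ for each $k$.

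The main obstacle is precisely this orthogonality $v_j \perp w_k$ for all $w_k$ in the orthogonal complement of $\spa\{u_1,\dots,u_q\}$ within $M$. The key idea is the extremal property of the $u_i$: each $u_i$ is chosen to maximize $|\langle u, v_i\rangle|$ over unit vectors $u \in M \cap M_{i-1}^{\perp}$, and the maximizing value is $\cos\theta_i = \langle u_i, v_i\rangle$. I would argue that if $\langle v_j, w_k\rangle \neq 0$ for some unit vector $w_k \in M$ orthogonal to $u_1,\dots,u_q$, then one could form a suitable unit vector in $M \cap M_{j-1}^{\perp}$ of the form $\cos t\, u_j + \sin t\, w_k$ and increase the inner product with $v_j$ beyond $\cos\theta_j$, contradicting maximality of $u_j$. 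Concretely, a first-order (Lagrange-multiplier / stationarity) computation shows that at the maximizer the gradient condition forces $v_j$ to have no component along any direction in $M\cap M_{j-1}^{\perp}$ orthogonal to $u_j$; since each $w_k$ lies in $M \cap M_{j-1}^{\perp}$ and is orthogonal to $u_j$, this yields $\langle v_j, w_k\rangle = 0$.

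Alternatively, and perhaps more cleanly, I would avoid the stationarity argument by invoking the standard structure theorem behind principal vectors: the recursive construction produces a Jordan-type decomposition in which $\spa\{u_j, v_j\}$ reduces to a (one- or two-dimensional) subspace on which the geometry of $M$ and $N$ is completely described by the single angle $\theta_j$, with $v_j = \cos\theta_j\, u_j + \sin\theta_j\, u_j'$ for a unit vector $u_j' \in M^{\perp}$. Granting this representation (which is exactly the content developed in \cite{Galantai3}), projecting onto $M$ immediately annihilates the $u_j'$ term and returns $\cos\theta_j\, u_j$, giving the result with no further computation; the symmetric statement for $P_N u_j$ is then obtained by the same token with the roles of $M$ and $N$ exchanged.
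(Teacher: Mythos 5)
Your first argument is correct, and it is worth noting that the paper itself offers no proof of this lemma at all --- it simply defers to \cite{Galantai3} --- so your derivation is genuinely self-contained where the paper is not. The skeleton is sound: the principal vectors $u_1,\dots,u_q$ are orthonormal by construction, expanding $P_M v_j$ in an orthonormal basis $\{u_1,\dots,u_q,w_1,\dots,w_{p-q}\}$ of $M$ and applying Lemma \ref{Biortho} collapses the $u_i$-part to $\cos\theta_j\, u_j$, and the extremality of $u_j$ kills the remaining coefficients $\langle v_j,w_k\rangle$. One detail you should make explicit: in a complex space the unphased path $\cos t\, u_j+\sin t\, w_k$ is not enough, since first-order stationarity of $|\langle\,\cdot\,,v_j\rangle|$ along it only forces $\mathrm{Re}\langle w_k,v_j\rangle=0$. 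Writing $\langle v_j,w_k\rangle=re^{i\alpha}$ with $r>0$ and taking the unit vector $u=(\cos\theta_j\,u_j+re^{i\alpha}w_k)/\sqrt{\cos^2\theta_j+r^2}$, which lies in $M\cap M_{j-1}^{\perp}$, gives $\langle u,v_j\rangle=\sqrt{\cos^2\theta_j+r^2}>\cos\theta_j$, contradicting the supremum defining $\theta_j$ (and covering the degenerate case $\theta_j=\pi/2$ as well). Your word ``suitable'' gestures at this, but the phase rotation is exactly where the complex-linear content sits, so spell it out; equivalently, Cauchy--Schwarz gives $\sup_u|\langle u,v_j\rangle|=\|P_{M\cap M_{j-1}^{\perp}}v_j\|$ with equality only at unimodular multiples of the normalized projection, which yields $P_{M\cap M_{j-1}^{\perp}}v_j=\cos\theta_j\,u_j$ in one stroke.

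By contrast, your ``alternatively, and perhaps more cleanly'' paragraph is not a proof. The representation $v_j=\cos\theta_j\,u_j+\sin\theta_j\,u_j'$ with $u_j'\in M^{\perp}$ is equivalent to the identity $P_Mv_j=\cos\theta_j\,u_j$ being proved (decompose $v_j=P_Mv_j+(I-P_M)v_j$), so ``granting this representation'' is granting the lemma; that route reduces to citing \cite{Galantai3}, which is precisely what the paper already does. Keep the first argument and drop the second, or demote it to a remark on the geometric meaning of the result.
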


The results in Lemma \ref{Biortho} and Lemma \ref{TwoProjs} allow us to break down $\comp$ and to decompose it in an optimal fashion in terms of two subspaces. The next definition will be used as the building blocks to express many of the results in this paper.    

\begin{definition}
  If $0<\theta_k<\frac{\pi}{2}$, the two dimensional subspace spanned by $u_k$ and $v_k$ is called the \emph{$k$-th Jordan plane associated with $M$ and $N$}. $J_k$ denotes the $k$-th Jordan plane.
\end{definition}

An orthogonal decomposition of $\comp$ in terms of two subspaces $M$ and $N$ can be given as follows:

 \begin{align*}
   \comp=(M \cap N) \oplus (M \cap N^{\perp}) \oplus (M^{\perp} \cap N) \oplus (M^{\perp} \cap N^{\perp}) \oplus R ,
 \end{align*}
 where $R$ denotes the orthogonal complement of $(M \cap N) \oplus (M \cap N^{\perp}) \oplus (M^{\perp} \cap N) \oplus (M^{\perp} \cap N^{\perp})$.

\begin{definition}
 Two subspaces $M$ and $N$ of $\comp$ are said to be in \emph{generic position} if $$(M \cap N) \oplus (M \cap N^{\perp}) \oplus (M^{\perp} \cap N) \oplus (M^{\perp} \cap N^{\perp})=0.$$
 \end{definition}

 $R_M$ and $R_N$ will denote $R\cap M$ and $R \cap N$, respectively.   

Observe that Jordan planes are two dimensional subspaces and mutually orthogonal by Lemma \ref{Biortho}, and if two subspaces $M$ and $N$ are in generic position with $\dim M = \dim N=p$, then there are $p \,$ Jordan planes.

 Over the Jordan plane $J_k$ formed by the two principal vectors $u_k$ and $v_k$, we can establish \emph{trigonometric identities} by taking the steps described below.

  First, let $s_k$ be a unit vector in the $k$-th Jordan plane perpendicular to $u_k$ and $\langle v_k, s_k \rangle > 0$. Note that $\langle v_k,s_k \rangle <1$. Second, we choose a unit vector $t_k \in J_k$  satisfying $\langle v_k, t_k \rangle =0$ and $\langle s_k, t_k \rangle >0$. Then the four vectors $u_k,v_k,s_k,t_k \in J_k$ hold the following properties.

\begin{lemma}\label{four vectors}
 Let $J_k$ be the $k$-th Jordan plane associated with a principal angle $0<\theta_k<\frac{\pi}{2}$ and let $u_k$ and $v_k$ be principal vectors corresponding to $\theta_k$, i.e. $\cos \theta_k =\langle u_k, v_k \rangle$. 
 Then the four unit vectors $u_k,v_k,s_k,t_k \in J_k$ constructed in the preceding argument satisfy the following properties.
  \begin{enumerate}
    \item $\langle u_k, v_k \rangle^2+\langle v_k, s_k \rangle^2=1$ and $\langle v_k, s_k \rangle^2+\langle s_k, t_k \rangle^2=1$. 
    \item $\langle u_k, v_k \rangle = \langle s_k, t_k \rangle=\cos \theta_k$.
    \item $\langle v_k, s_k \rangle = -  \langle u_k, t_k \rangle=\sin \theta_k$. 

  \end{enumerate}
\end{lemma}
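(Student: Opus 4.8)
The plan is to work entirely inside the two-dimensional plane $J_k$ and to reduce every claim to a coordinate computation in a single convenient orthonormal basis. Since $s_k$ is by construction a unit vector in $J_k$ orthogonal to the unit vector $u_k$, the pair $\{u_k, s_k\}$ is an orthonormal basis of $J_k$, and I would resolve both $v_k$ and $t_k$ against it. At the outset I would record two facts that the rest of the argument leans on: first, the recursive construction in the definition of principal angles produces $\langle u_k, v_k\rangle$ as the \emph{nonnegative real number} $\cos\theta_k$ (not merely a complex number of that modulus), which is what allows the squared inner products in part (1) to be read as $\cos^2\theta_k$ and $\sin^2\theta_k$; and second, the hypothesis $0<\theta_k<\frac{\pi}{2}$ forces $\cos\theta_k,\sin\theta_k\in(0,1)$, a fact I will use both to divide by $\cos\theta_k$ and to fix signs.

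For $v_k$ I would apply Parseval's identity in the basis $\{u_k,s_k\}$: since $\|v_k\|=1$ and $\abs{\langle u_k, v_k\rangle}=\cos\theta_k$, the $s_k$-component satisfies $\abs{\langle v_k, s_k\rangle}^2 = 1-\cos^2\theta_k = \sin^2\theta_k$. The construction stipulates $\langle v_k, s_k\rangle>0$, so $\langle v_k, s_k\rangle=\sin\theta_k$. This already delivers the first Pythagorean identity of part (1) and the equality $\langle v_k, s_k\rangle=\sin\theta_k$ of part (3), and it records the expansion $v_k=\cos\theta_k\,u_k+\sin\theta_k\,s_k$, which drives everything that follows.

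For $t_k$ I would impose its two defining conditions against this expansion of $v_k$. Writing $t_k=a\,u_k+b\,s_k$ and using $\langle s_k,t_k\rangle>0$ so that the $s_k$-coefficient $b$ is real and positive, the orthogonality $\langle v_k, t_k\rangle=0$ becomes (after the conjugation bookkeeping) the real relation $\cos\theta_k\,a+\sin\theta_k\,b=0$; since $\cos\theta_k\neq0$ this forces $a=-\tan\theta_k\,b$, which is real and nonpositive. Combining with $\|t_k\|=1$, i.e. $\abs{a}^2+\abs{b}^2=1$, yields $b^2\sec^2\theta_k=1$, whence $b=\cos\theta_k$ and $a=-\sin\theta_k$. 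Reading off the coefficients then finishes the lemma: $\langle s_k,t_k\rangle=\cos\theta_k$ gives the remaining equality of part (2), $\langle u_k,t_k\rangle=-\sin\theta_k$ gives the remaining equality of part (3), and $\sin^2\theta_k+\cos^2\theta_k=1$ gives the second Pythagorean identity of part (1).

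The only genuine subtlety — and the step I expect to be the main obstacle — is the complex bookkeeping. One must verify that the phase freedom in choosing $s_k$ and $t_k$ (each of which spans a one-dimensional orthogonal complement inside $J_k$) really can be used to make $\langle v_k, s_k\rangle$ and $\langle s_k, t_k\rangle$ \emph{real} and positive, and that the orthogonality relation $\langle v_k, t_k\rangle=0$ then pins $\langle u_k, t_k\rangle$ to a real value with the correct negative sign rather than merely to something of the right modulus. Once the reality of these quantities is secured, the identities are immediate from the Pythagorean relation and the planar geometry, and no estimate or limiting argument is required.
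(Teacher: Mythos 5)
Your proof is correct and takes essentially the same approach as the paper: both arguments work inside $J_k$ with the orthonormal basis $\{u_k,s_k\}$, use Parseval together with the stipulated sign conditions to get $v_k=\cos\theta_k\,u_k+\sin\theta_k\,s_k$, and then exploit $\langle v_k,t_k\rangle=0$ with $\langle s_k,t_k\rangle>0$. The only cosmetic difference is that you solve explicitly for the coordinates of $t_k$ and read all three parts off at once, whereas the paper obtains (2) by comparing the two Pythagorean identities (the second coming from expanding $s_k$ in the basis $\{v_k,t_k\}$) and then gets (3) by expanding the orthogonality relation and factoring.
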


\begin{proof}
  \begin{enumerate}
   \item Note that $\{u_k, s_k \}$ forms an orthonormal basis for $J_k$, so we can write $v_k=\alpha u_k + \beta s_k$, where $\alpha = \langle v_k, u_k \rangle$ and $\beta =\langle v_k, s_k \rangle$. Since $v_k$ is a unit vector, $\langle u_k, v_k \rangle^2+\langle v_k, s_k \rangle^2=|\alpha|^2+|\beta|^2=|v_k|^2=1$. The identity $\langle v_k, s_k \rangle^2+\langle s_k, t_k \rangle^2=1$ can be proven similarly. 
   \item It is easy to see from $(1)$ that $\langle u_k, v_k \rangle^2=\langle s_k, t_k \rangle^2$. Since both $\langle u_k, v_k \rangle$ and $\langle s_k, t_k \rangle$ are positive, $\langle u_k, v_k \rangle=\langle s_k, t_k \rangle$. 
   \item Since $v_k \perp t_k$, 
    \begin{align*} 
    \langle v_k, t_k \rangle &= \langle \langle v_k, s_k \rangle s_k + \langle v_k, u_k \rangle u_k, t_k \rangle \\
    						&= \langle v_k, s_k \rangle \langle s_k, t_k \rangle + \langle v_k, u_k \rangle  \langle u_k, t_k \rangle \\
           					&= 0.                  				
    \end{align*}
    Since $\langle v_k, u_k \rangle = \langle u_k, v_k \rangle = \langle s_k, t_k \rangle \neq 0$, we have $\langle v_k, s_k \rangle +  \langle u_k, t_k \rangle =0$.
  \end{enumerate}
\end{proof}

Observe that if two subspaces $M$ and $N$ of $\mathbb{C}^n$ are in generic position, then $\dim M = \dim M^{\perp} =\dim N = \dim N^{\perp}=p$, where $2p=n$. Clearly, the collections of unit vectors $\{u_k\}_{k=1}^p$ and $\{v_k\}_{k=1}^p$, where $u_k$ and $v_k$ are principal vectors corresponding to the principal angle $\theta_k$ between $M$ and $N$, form orthonormal bases for $M$ and $N$, respectively. Since the pair $(u_k,v_k)$ spans the $k$-th Jordan plane, $J_k$, for each $k=1,\cdots,p$, there are $p$ Jordan planes that are mutually orthogonal and $\oplus_{k=1}^p J_k =\mathbb{C}^n $.   

\begin{lemma} \label{four bases}
 Assume that $M$ and $N$ are subspaces of $\mathbb{C}^n$ in generic position. Let $\dim M = \dim M^{\perp} =\dim N = \dim N^{\perp}=p$, where $2p=n$. 
 
 Consider the four collections of unit vectors $\{u_k\}_{k=1}^{p}$, $\{v_k\}_{k=1}^{p}$, $\{s_k\}_{k=1}^{p}$, and $\{t_k\}_{k=1}^{p}$, where $u_k,v_k,s_k$, and $t_k$ are four vectors constructed in Lemma \ref{four vectors} associated with $J_k$. Then the following are true.
 
  \begin{enumerate}

  \item $\{u_k\}_{k=1}^{p}$, $\{v_k\}_{k=1}^{p}$, $\{s_k\}_{k=1}^{p}$, and $\{t_k\}_{k=1}^{p}$ are orthonormal bases for $M$, $N$, $M^{\perp}$, and $N^{\perp}$, respectively.
  \item $\langle u_i, v_j \rangle = \langle s_i, t_j \rangle = \delta_{ij}\cos \theta_i$ and $\langle s_i, v_j \rangle = -\langle u_i, t_j \rangle = \delta_{ij}\sin \theta_i$.
  \item $u_k=(\cot \theta_k) s_k - (\csc \theta_k) t_k$.
  \end{enumerate}
\end{lemma}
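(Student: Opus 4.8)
The plan is to reduce everything to two structural facts that are already available: the principal vectors $\{u_k\}$ and $\{v_k\}$ are orthonormal by the recursive construction, and the Jordan planes $J_1,\dots,J_p$ are mutually orthogonal (a consequence of Lemma~\ref{Biortho}, as noted before the lemma). Once these are invoked, every cross-plane inner product $\langle \cdot_i, \cdot_j\rangle$ with $i\neq j$ vanishes automatically, and all the genuine content collapses to the single-plane identities already proved in Lemma~\ref{four vectors}.

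For part~(1) I would first recall that $\{u_k\}_{k=1}^p$ and $\{v_k\}_{k=1}^p$ are orthonormal directly from the construction: each $u_i$ lies in $M\cap M_{i-1}^{\perp}$, hence is orthogonal to $u_1,\dots,u_{i-1}$, and likewise for the $v_i$; since there are $p$ of each and $\dim M=\dim N=p$, these are orthonormal bases for $M$ and $N$. For the $s_k$ and $t_k$, orthonormality follows because each is a unit vector and, for $i\neq j$, the vectors $s_i\in J_i$ and $s_j\in J_j$ lie in orthogonal planes, so $\langle s_i,s_j\rangle=0$ (and similarly for the $t_k$). It then remains to locate them in the right subspace. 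I would show $s_k\in M^{\perp}$ by checking $s_k\perp u_i$ for every $i$: for $i=k$ this is the defining property $s_k\perp u_k$, and for $i\neq k$ it holds because $s_k$ and $u_i$ sit in orthogonal Jordan planes. Since $\{u_i\}$ spans $M$, this gives $s_k\in M^{\perp}$, so the $p$ orthonormal vectors $\{s_k\}$ form a basis of the $p$-dimensional space $M^{\perp}$. The argument for $\{t_k\}\subseteq N^{\perp}$ is identical, using $t_k\perp v_k$ together with plane-orthogonality.

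For part~(2) I would separate the diagonal and off-diagonal cases. When $i=j$ the four identities are exactly the conclusions of Lemma~\ref{four vectors}, namely $\langle u_k,v_k\rangle=\langle s_k,t_k\rangle=\cos\theta_k$ and $\langle s_k,v_k\rangle=-\langle u_k,t_k\rangle=\sin\theta_k$; when $i\neq j$ each pair lies in orthogonal Jordan planes, so every inner product is $0$, and combining the cases yields the Kronecker-delta formulas. For part~(3) I would work entirely inside the plane $J_k$. Since $\langle s_k,t_k\rangle=\cos\theta_k\in(0,1)$, the vectors $s_k$ and $t_k$ are non-parallel and hence form a basis of $J_k$, so $u_k=a\,s_k+b\,t_k$. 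Pairing with $s_k$ gives $0=\langle u_k,s_k\rangle=a+b\cos\theta_k$, and pairing with $t_k$ gives $-\sin\theta_k=\langle u_k,t_k\rangle=a\cos\theta_k+b$; solving this system via $1-\cos^2\theta_k=\sin^2\theta_k$ produces $a=\cot\theta_k$ and $b=-\csc\theta_k$, the claimed expansion.

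I do not anticipate any serious obstacle: the lemma is essentially bookkeeping organized around the mutual orthogonality of the Jordan planes, which does the heavy lifting by annihilating all cross-plane terms. The only point requiring mild care is keeping the inner-product convention consistent when solving the $2\times2$ system in part~(3), so that the real coefficients $\cot\theta_k$ and $-\csc\theta_k$ emerge as stated rather than their conjugates.
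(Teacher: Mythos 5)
Your proposal is correct and takes essentially the same approach as the paper: part (1) via $s_k\perp u_k$ by construction together with mutual orthogonality of the Jordan planes, part (2) by combining Lemma~\ref{four vectors} on the diagonal with vanishing cross-plane inner products (the content of Lemma~\ref{Biortho}), and part (3) by expanding $u_k$ in the basis $\{s_k,t_k\}$ and solving the identical $2\times 2$ system $0=\alpha+\beta\cos\theta_k$, $-\sin\theta_k=\alpha\cos\theta_k+\beta$. The only difference is that you make explicit some bookkeeping (orthonormality of $\{u_k\}$ and $\{v_k\}$, linear independence of $\{s_k,t_k\}$) that the paper delegates to the remarks preceding the lemma.
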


\begin{proof}

   \begin{enumerate}
      \item Note that $s_k \perp u_k$ by the definition of $s_k$ and $s_k \perp u_i$ for $i \neq k$ by the mutual orthogonality of Jordan planes, so $s_k \in M^{\perp}$ for each $k=1,\cdots, p$. Therefore, $\{s_k\}_{k=1}^{p}$ is an orthonormal basis for $M^{\perp}$. Similarly, $\{t_k\}_{k=1}^{p}$ forms an orthonormal basis for $N^{\perp}$. 
     
     \item The results follow from Lemma \ref{Biortho} and Lemma \ref{four vectors}.
     
     \item  Note that $\{s_k,t_k\}$ is linearly independent, so we can write $u_k=\alpha s_k + \beta t_k $ for some $\alpha, \beta \in \mathbb{C}$. To determine $\alpha$ and $\beta$, we compute
     
         \begin{align*}
    0=\langle u_k, s_k \rangle =& \langle \alpha s_k + \beta t_k, s_k \rangle  \\
    =& \alpha \langle s_k, s_k  \rangle + \beta \langle t_k, s_k  \rangle \\
    =& \alpha + \beta \cos \theta_k.
  \end{align*}
  It follows from \ref{four bases} (3) that
      \begin{align*}
    -\sin \theta_k =\langle u_k, t_k \rangle =& \langle \alpha s_k + \beta t_k, t_k \rangle  \\
    =& \alpha \langle s_k, t_k  \rangle + \beta \langle t_k, t_k  \rangle \\
    =& \alpha \cos \theta_k + \beta.
  \end{align*}
  Solving the system of equations in $\alpha$ and $\beta$, we obtain $\alpha =\cot \theta_k$ and $\beta=-\csc \theta_k$.
     
     \end{enumerate}

\end{proof}

Two pairs of subspaces $(M_1,N_1)$ and $(M_2,N_2)$ are said to be \emph{unitarily equivalent} if there exists a unitary operator $U:\comp \to \comp$ such that $UM_1=M_2$ and $UN_1=N_2$. Jordan proved the following theorem in \cite{Jordan}.
\begin{theorem}\label{jordan}
       Two pairs of subspaces $(M_1,N_1)$ and $(M_2,N_2)$ are unitarily equivalent if and only if the following are true.
       \begin{enumerate}
         \item $ \dim(M_1 \cap N_1)=\dim(M_2 \cap N_2)$, $\dim(M_1 \cap \npp_1)=\dim(M_2 \cap \npp_2)$, $\dim(\mpp_1 \cap N_1)=\dim(\mpp_2 \cap N_2)$, $\dim(\mpp_1 \cap \npp_1)=\dim(\mpp_2 \cap \npp_2)$, $\dim(R_1)=\dim(R_2)$.
         \item The principal angles between $M_1$ and $N_1$ are equal to the principal angles between $M_2$ and $N_2$.
         
         \end{enumerate}

\end{theorem}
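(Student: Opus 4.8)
The plan is to prove both implications, treating necessity as routine and concentrating on the explicit construction of the unitary for the sufficiency direction.

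For necessity, suppose a unitary $U:\comp\to\comp$ satisfies $UM_1=M_2$ and $UN_1=N_2$. Since $U$ preserves orthogonality, $U\mpp_1=\mpp_2$ and $U\npp_1=\npp_2$, so $U$ restricts to a bijection from each of the four intersection subspaces of $(M_1,N_1)$ onto the corresponding subspace of $(M_2,N_2)$, and likewise $UR_1=R_2$; hence all the dimension equalities in condition (1) hold. Because $U$ preserves inner products and norms, sending the principal vectors $u_i,v_i$ of $(M_1,N_1)$ to $Uu_i\in M_2$ and $Uv_i\in N_2$ reproduces the recursive supremum definition verbatim, so the principal angles agree and condition (2) holds as well.

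For sufficiency I would build $U$ blockwise along the orthogonal decomposition $\comp=(M\cap N)\oplus(M\cap\npp)\oplus(\mpp\cap N)\oplus(\mpp\cap\npp)\oplus R$ for each pair, where $R$ splits into mutually orthogonal Jordan planes, one for each principal angle in $(0,\frac{\pi}{2})$. Condition (1) gives equal dimensions for the four intersection subspaces, so I may choose arbitrary unitary identifications $M_1\cap N_1\to M_2\cap N_2$, $M_1\cap\npp_1\to M_2\cap\npp_2$, $\mpp_1\cap N_1\to\mpp_2\cap N_2$, and $\mpp_1\cap\npp_1\to\mpp_2\cap\npp_2$; each of these automatically respects membership in $M$, $N$, $\mpp$, and $\npp$. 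The decisive step is the treatment of $R$: by condition (2) the principal angles strictly between $0$ and $\frac{\pi}{2}$, counted with multiplicity, agree for the two pairs, so I can pair each Jordan plane $J_k^{(1)}$ of $(M_1,N_1)$ with a plane $J_k^{(2)}$ of $(M_2,N_2)$ sharing the same angle $\theta_k$. On $J_k^{(1)}$ I use the orthonormal basis $\{u_k,s_k\}$ supplied by Lemma \ref{four vectors} and set $Uu_k^{(1)}=u_k^{(2)}$ and $Us_k^{(1)}=s_k^{(2)}$. Since Lemma \ref{four vectors} expands $v_k=(\cos\theta_k)u_k+(\sin\theta_k)s_k$ with the same $\theta_k$ on each side, this forces $Uv_k^{(1)}=v_k^{(2)}$; thus $U$ carries the $M$-vector of the plane to the $M$-vector and the $N$-vector to the $N$-vector. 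Assembling these blocks gives an operator on all of $\comp$ that maps an orthonormal basis to an orthonormal basis, hence is unitary, and satisfies $UM_1=M_2$ and $UN_1=N_2$ because $M_1$ is spanned by the $u_k^{(1)}$ together with $M_1\cap N_1$ and $M_1\cap\npp_1$, all sent into $M_2$, and symmetrically for $N_1$.

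I expect the main difficulty to be organizational rather than conceptual: one must verify that the vectors $\{u_k,s_k\}$ ranging over all Jordan planes, together with orthonormal bases of the four intersection subspaces, really form an orthonormal basis of $\comp$, so that the blockwise definition yields a genuine unitary, and one must match multiplicities correctly when several principal angles coincide. The conceptual engine is that the coordinates of $v_k$ relative to $\{u_k,s_k\}$ depend only on $\theta_k$, so equal angles guarantee a plane-to-plane unitary respecting both subspaces.
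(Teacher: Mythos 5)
The paper does not actually prove this theorem: it is stated as Jordan's classical result and attributed to \cite{Jordan}, so there is no in-paper proof to measure yours against. Judged on its own merits, your argument is correct and, pleasantly, it runs on exactly the machinery the paper develops for its \emph{other} results: the five-fold orthogonal decomposition, the mutual orthogonality of Jordan planes, and the expansion $v_k=(\cos\theta_k)u_k+(\sin\theta_k)s_k$ from Lemma \ref{four vectors}; indeed your blockwise unitary is the same kind of construction the paper performs explicitly in Theorem \ref{uteq}, just applied across two different pairs rather than to a pair and its orthocomplements. The necessity direction is routine, as you say, since a unitary preserves inner products, orthogonal complements, and hence intersections, dimensions, and the recursive suprema defining the angles.

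Two points in the sufficiency direction deserve to be made explicit rather than asserted. First, your decomposition $M_1=(M_1\cap N_1)\oplus(M_1\cap\npp_1)\oplus\spa\{u_k^{(1)}\}$ requires knowing that the principal vectors $u_k$ attached to angles in $(0,\tfrac{\pi}{2})$ are orthogonal not only to $M\cap N$ (which the recursion gives) but also to $M\cap\npp$, so that they genuinely span $R_M$; this is not automatic from the recursive definition and is cleanest to see from the spectral picture, since the $u_k$ are eigenvectors of $P_MP_NP_M$ with eigenvalues $\cos^2\theta_k\in(0,1)$, while $M\cap N$ and $M\cap\npp$ are the eigenspaces for the eigenvalues $1$ and $0$ of that self-adjoint operator, whence the required orthogonality. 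Second, in the complex setting the identity $v_k=(\cos\theta_k)u_k+(\sin\theta_k)s_k$ with \emph{real} coefficients depends on the phase conventions of Lemma \ref{four vectors} ($\langle u_k,v_k\rangle=\cos\theta_k>0$ and $\langle v_k,s_k\rangle=\sin\theta_k>0$); without fixing those phases on both pairs $(M_1,N_1)$ and $(M_2,N_2)$, the forced equality $Uv_k^{(1)}=v_k^{(2)}$ — the hinge of your whole construction — would fail. With these two verifications inserted, your proof is complete and self-contained, which is more than the paper itself offers for this statement.
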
\label{Jordan}

 In the proof of the following theorem, we use the same collections of unit vectors introduced earlier. The symbol $x \otimes y^*$ denotes the rank-one operator for $x,y \in \comp$ defined by $(x \otimes y^*)(f)=\langle f, y \rangle x$ for $f \in \comp$.   
\begin{theorem}\label{uteq}
  If $M$ and $N$ are subspaces of $\mathbb{C}^n$ in generic position, then $(M,N)$ and $(M^{\perp},N^{\perp})$ are unitarily equivalent.
\end{theorem}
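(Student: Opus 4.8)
The plan is to avoid invoking Jordan's Theorem~\ref{jordan} and instead construct an explicit unitary $U:\comp \to \comp$ as a direct sum of a single simple rotation on each Jordan plane, using the four orthonormal bases supplied by Lemma~\ref{four bases}. Since $M$ and $N$ are in generic position, there are exactly $p$ mutually orthogonal Jordan planes $J_1,\dots,J_p$ with $\oplus_{k=1}^{p}J_k=\comp$, and $\{u_k\}$, $\{v_k\}$, $\{s_k\}$, $\{t_k\}$ are orthonormal bases for $M$, $N$, $\mpp$, $\npp$ respectively, with all four of $u_k,v_k,s_k,t_k$ lying in $J_k$.

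First I would record the coordinates of $v_k$ and $t_k$ within $J_k$ relative to the orthonormal basis $\{u_k,s_k\}$ of that plane. From Lemma~\ref{four bases}(2) one reads off $v_k=\cos\theta_k\,u_k+\sin\theta_k\,s_k$ and $t_k=-\sin\theta_k\,u_k+\cos\theta_k\,s_k$; that is, $\{u_k,s_k\}$ and $\{v_k,t_k\}$ are two orthonormal frames of the same plane $J_k$ related by a rotation through $\theta_k$. Then I would define $U$ by its action on the orthonormal basis $\{u_k\}_{k=1}^{p}\cup\{s_k\}_{k=1}^{p}$ of $\comp$, setting $Uu_k=s_k$ and $Us_k=-u_k$ for each $k$; equivalently $U=\sum_{k=1}^{p}\bigl(s_k\otimes u_k^{*}-u_k\otimes s_k^{*}\bigr)$. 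This map is the quarter-turn on each $J_k$, hence carries an orthonormal basis to an orthonormal basis and is therefore unitary. By construction $UM=U\,\spa\{u_k\}=\spa\{s_k\}=\mpp$.

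Finally I would verify $UN=\npp$ by checking that $U$ sends each $v_k$ to $t_k$: using the coordinate expression above, $Uv_k=\cos\theta_k\,Uu_k+\sin\theta_k\,Us_k=-\sin\theta_k\,u_k+\cos\theta_k\,s_k=t_k$, so $UN=U\,\spa\{v_k\}=\spa\{t_k\}=\npp$, which establishes the unitary equivalence. The only place any genuine work is needed is the bookkeeping in the second paragraph, namely getting the trigonometric coordinates and the signs of $v_k$ and $t_k$ correct so that a single rotation simultaneously carries $M$ to $\mpp$ and $N$ to $\npp$; once Lemma~\ref{four bases} is in hand this is routine, and the mutual orthogonality of the Jordan planes is exactly what lets the planewise rotations be assembled into one global unitary.
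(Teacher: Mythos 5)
Your proposal is correct and takes essentially the same approach as the paper: the paper also proves this by constructing an explicit planewise unitary, namely $U=\sum_{k=1}^{p}\csc\theta_k\,(t_k\otimes s_k^{*}-s_k\otimes t_k^{*})$, and verifies $Uu_j=s_j$, $Uv_j=t_j$, $Us_j=-u_j$, so the paper's $U$ agrees with yours on the orthonormal basis $\{u_k,s_k\}_{k=1}^{p}$ and is in fact the identical operator. The only difference is presentational: you write the quarter-turn in the orthonormal frame $\{u_k,s_k\}$, which makes unitarity immediate, whereas the paper writes it via rank-one operators in the non-orthogonal pair $\{s_k,t_k\}$ with $\csc\theta_k$ normalization and must invoke Lemma \ref{four bases}(3) to close the computation.
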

\begin{proof}
  Let $\dim M = \dim M^{\perp} =\dim N = \dim N^{\perp}=p$ where $p=\frac{n}{2}$. 
  
  Define a mapping $U:\comp \to \comp$ by
  $$U=\sum_{k=1}^{p} \csc \theta_k ( t_k \otimes s_k^* - s_k \otimes t_k^*).$$
  It is easy to see that $U^*=-U$. 
  We now compute images of the unit vectors under $U$.  
  \begin{align*}
    Uu_j=&\left(\sum_{k=1}^{p} \csc \theta_k ( t_k \otimes s_k^* - s_k \otimes t_k^*)\right)(u_j) \\
       =& \left(\sum_{k=1}^{p} \csc \theta_k ( \langle u_j, s_k \rangle t_k - \langle u_j, t_k \rangle s_k)\right) \\
       =& \csc \theta_j (-\langle u_j, t_j \rangle s_j) \\
       =& \csc \theta_j \sin \theta_j s_j \\
       =& s_j
  \end{align*}
  This shows that $UM=M^{\perp}$. Similarly, we can compute $Uv_j=t_j$, which implies $UN=N^{\perp}.$ 
  
  Now,
  \begin{align*}
   Us_j=&\left(\sum_{k=1}^{p} \csc \theta_k ( t_k \otimes s_k^* - s_k \otimes t_k^*)\right)(s_j) \\
       =& \left(\sum_{k=1}^{p} \csc \theta_k ( \langle s_j, s_k \rangle t_k - \langle s_j, t_k \rangle s_k)\right) \\
       =& \csc \theta_j (t_j-\langle s_j, t_j \rangle s_j) \\
       =& \csc \theta_j t_j - \csc \theta_j \cos \theta_j s_j \\
       =& \csc \theta_j t_j - \cot \theta_j s_j \\
       =& -u_j \,\,\,\, \textrm{by Lemma\ref{four bases}} (3).
  \end{align*}
  
  Combining the two results, we obtain $U^*Uu_j=U^*s_j=-Us_j=u_j$ and $U^*Us_j=U^*(-u_j)=Uu_j=s_j$. 
  Since $\{u_j,s_j\}_{j=1}^p$ is an orthonormal basis for $\comp$, $U$ is a unitary operator. Therefore, $(M,N)$ and $(M^{\perp},N^{\perp})$ are unitarily equivalent.
\end{proof}

 \begin{cor}
 Assume that two subspaces $M$ and $N$ of $\comp$ are in generic position with $\dim M = \dim M^{\perp} =\dim N = \dim N^{\perp}=p$, where $p=\frac{n}{2}$. Let $\{\theta_k\}_{k=1}^p$ be principal angles between $M$ and $N$ and let $\{u_k\}_{k=1}^{p} \subset M$ and $\{v_k\}_{k=1}^{p} \subset N$ be principal vectors corresponding to $\{\theta_k\}_{k=1}^p$, i.e. $\langle u_k, v_k \rangle =\cos \theta_k$. 
 
 If we let $\{s_k\}_{k=1}^{p}$ and $\{t_k\}_{k=1}^{p}$ be the orthonormal bases for $M^{\perp}$ and $N^{\perp}$ as constructed in Lemma \ref{four vectors}, then $\{\theta_k\}_{k=1}^p$ are principal angles $M^{\perp}$ and $N^{\perp}$, and $\{s_k\}_{k=1}^{p}$ and $\{t_k\}_{k=1}^{p}$ are corresponding principal vectors.

 \end{cor}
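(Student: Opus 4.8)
The plan is to read this off from Theorem~\ref{uteq} and the explicit computation in its proof. The guiding principle is that a unitary operator transports the entire recursive construction of principal angles and vectors: if $U$ is unitary and $W=UM$, $Z=UN$, then because $U$ preserves inner products and orthogonality, applying $U$ to every step of the recursive definition for $(M,N)$ yields precisely the recursive definition for $(W,Z)$. Consequently $(W,Z)$ has the same principal angles as $(M,N)$, and the images under $U$ of the principal vectors of $(M,N)$ form a valid choice of principal vectors for $(W,Z)$.

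First I would recall from the proof of Theorem~\ref{uteq} that the unitary $U=\sum_{k=1}^{p}\csc\theta_k(t_k\otimes s_k^{*}-s_k\otimes t_k^{*})$ satisfies $Uu_j=s_j$ and $Uv_j=t_j$, and maps $M$ onto $M^{\perp}$ and $N$ onto $N^{\perp}$. Thus $(M^{\perp},N^{\perp})=(UM,UN)$ is the unitary image of $(M,N)$, and $\{s_k\}=\{Uu_k\}$, $\{t_k\}=\{Uv_k\}$ are the images of the principal vectors of $(M,N)$.

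Next I would make the transport principle precise by induction on $i$. The base case is that $\cos\theta_1=\sup\{|\langle u,v\rangle|:u\in M,\,v\in N,\,\|u\|=\|v\|=1\}$; since $U$ is an isometric bijection from $M$ onto $M^{\perp}$ and from $N$ onto $N^{\perp}$ with $|\langle Uu,Uv\rangle|=|\langle u,v\rangle|$, the supremum defining the first principal angle of $(M^{\perp},N^{\perp})$ equals $\cos\theta_1$ and is attained at $s_1=Uu_1$, $t_1=Uv_1$. For the inductive step, $U$ sends $M_{i-1}=\spa\{u_1,\dots,u_{i-1}\}$ to $\spa\{s_1,\dots,s_{i-1}\}$, and being unitary it carries $M\cap M_{i-1}^{\perp}$ onto $M^{\perp}\cap\spa\{s_1,\dots,s_{i-1}\}^{\perp}$, and likewise on the $N$-side; hence the $i$-th supremum for $(M^{\perp},N^{\perp})$ again equals $\cos\theta_i$, attained at $s_i,t_i$. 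Since every $\theta_k$ lies in $(0,\tfrac{\pi}{2})$ and the $\theta_k$ are already arranged in increasing order, no reordering is needed, so $\{\theta_k\}_{k=1}^{p}$ are the principal angles of $(M^{\perp},N^{\perp})$ and $\{s_k\}$, $\{t_k\}$ are corresponding principal vectors (recall that principal vectors need not be unique, so it suffices to exhibit one valid choice).

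I expect the only real obstacle to be the bookkeeping in the inductive step: one must check that $U$ carries each constrained optimization domain $M\cap M_{i-1}^{\perp}$ exactly onto the domain $M^{\perp}\cap(UM_{i-1})^{\perp}$ that appears at stage $i$ of the construction for $(M^{\perp},N^{\perp})$, so that both the suprema and their maximizers correspond. This is routine once one notes that $U$ preserves orthogonal complements, but it is the step that actually uses unitarity rather than mere boundedness. As an alternative that avoids the induction entirely, Lemma~\ref{four bases}(2) gives $\langle s_i,t_j\rangle=\delta_{ij}\cos\theta_i$, so the cross-Gram matrix between the bases $\{s_k\}$ of $M^{\perp}$ and $\{t_k\}$ of $N^{\perp}$ is the diagonal matrix with entries $\cos\theta_1\ge\cdots\ge\cos\theta_p$; since the cosines of the principal angles are exactly the singular values of this matrix and $s_i,t_i$ its singular vectors, the conclusion follows immediately.
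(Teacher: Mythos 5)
Your proof is correct, and it rests on the same foundation as the paper's: the unitary $U$ of Theorem~\ref{uteq} with $Uu_k=s_k$, $Uv_k=t_k$. The difference is in how the conclusion is extracted. The paper's proof is a one-liner: it cites Theorem~\ref{jordan} (Jordan's criterion) together with Theorem~\ref{uteq}, so equality of the principal angles of $(M,N)$ and $(M^{\perp},N^{\perp})$ follows from the ``only if'' direction of Jordan's theorem. You instead prove that implication yourself, by induction on the stages of the recursive definition: a unitary carries each constrained domain $M\cap M_{i-1}^{\perp}$ onto $M^{\perp}\cap\spa\{s_1,\dots,s_{i-1}\}^{\perp}$ and preserves the suprema and their maximizers. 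This is more work than the paper's citation, but it buys something the citation does not literally cover: Theorem~\ref{jordan} as stated asserts only equality of angles, while the corollary also claims that $\{s_k\}$ and $\{t_k\}$ are \emph{corresponding principal vectors} --- and that claim is exactly the content of your transport argument (the paper leaves it implicit). Your alternative ending via the cross-Gram matrix $\langle s_i,t_j\rangle=\delta_{ij}\cos\theta_i$ from Lemma~\ref{four bases}(2) is also valid, but it invokes the characterization of principal angles and vectors as singular values and singular vectors of the Gram matrix of orthonormal bases; that fact is standard (it appears in the cited Galantai--Hegedus reference) but is never stated in the paper, so if you go that route you should cite it explicitly rather than treat it as known.
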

 \begin{proof}
  The results follow from Theorem \ref{jordan} and Theorem \ref{uteq}. 
 \end{proof}

 $M$ and $N$ are said to be \emph{in generalized generic position} if $M \cap N = \mpp \cap \npp =\{0\}$ and $\dim (M \cap \npp) = \dim (\mpp \cap N)$.
\begin{cor}
  Let $M$ and $N$ be in generalized generic position with $\dim M =\dim N =p$.
  If $\{\theta_i\}_{i=1}^{p}$ and $\{\eta_i\}_{i=1}^{p}$ are principal angles between $M$ and $N$ and between $M^{\perp}$ and $N^{\perp}$, respectively, then $\theta_i=\eta_i$ for $i=1,2,\cdots, p$.
\end{cor}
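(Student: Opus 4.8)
The plan is to reduce to the genuinely generic case already settled by Theorem~\ref{uteq} and the Corollary following it, by splitting off the two intersection pieces on which both pairs are forced to meet at right angles. First I would impose the hypotheses $M\cap N=\mpp\cap\npp=\{0\}$ on the standard decomposition, which then collapses to the orthogonal sum $\comp=(M\cap\npp)\oplus(\mpp\cap N)\oplus R$. Writing $A=M\cap\npp$, $B=\mpp\cap N$ and $d=\dim A=\dim B$ (equal by the definition of generalized generic position), I would note that each summand reduces $P_M$ and $P_N$: the Jordan planes making up $R$ are invariant under both projections by Lemma~\ref{TwoProjs}, while $A\subseteq M$ and $B\subseteq\mpp$. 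Consequently $P_M,P_N,P_{\mpp},P_{\npp}$ are all block diagonal for $\comp=(A\oplus B)\oplus R$.

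Next I would check that $(R_M,R_N)$ is in generic position inside $R$ with $\dim R_M=\dim R_N=p-d=\tfrac12\dim R$, so that the Corollary after Theorem~\ref{uteq} applies verbatim with $R$ as the ambient space. The point is that $R\cap\mpp=R\ominus R_M$ and $R\cap\npp=R\ominus R_N$ (complements taken inside $R$), because a vector of $R$ is automatically orthogonal to $A$ and $B$, so lying in $\mpp$ amounts to being orthogonal to $R_M$. The four intersections defining generic position for $(R_M,R_N)$ are $R\cap M\cap N$, $R\cap A$, $R\cap B$, and $R\cap\mpp\cap\npp$, all of which vanish. The Corollary then yields that the principal angles between $R_M$ and $R_N$ equal those between $R\cap\mpp$ and $R\cap\npp$, all computed inside $R$.

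To conclude I would match the angles block by block through the eigenvalue picture. Lemma~\ref{TwoProjs} gives $P_MP_N u_k=\cos^2\theta_k\,u_k$, so the self-adjoint operator $P_MP_NP_M$ has the principal vectors as eigenvectors and its eigenvalues on $M$ are the squared cosines of the principal angles of $(M,N)$. Being block diagonal, its restriction to $M=A\oplus R_M$ acts as $0$ on $A=M\cap\npp$ (contributing $d$ right angles) and, on $R_M$, produces the squared cosines of the principal angles between $R_M$ and $R_N$. The identical computation for $(\mpp,\npp)$ produces $d$ right angles from $B=\mpp\cap N$ together with the squared cosines of the angles between $R\cap\mpp$ and $R\cap\npp$. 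By the previous paragraph these nontrivial lists coincide, and each list contains exactly $d$ copies of $\pi/2$, so after sorting $\theta_i=\eta_i$ for every $i$.

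The step I expect to be the main obstacle is the eigenvalue characterization used in the last paragraph: that the recursively defined principal angles of a pair are precisely $\arccos\sqrt{\lambda}$ over the spectrum $\lambda$ of $P_MP_NP_M$ restricted to $M$, counted with multiplicity, and in particular that they distribute over a common reducing decomposition. Lemma~\ref{TwoProjs} supplies one direction (principal vectors are eigenvectors with the stated eigenvalues); I would complete it by showing that an orthonormal eigenbasis of $P_MP_NP_M|_M$, assembled separately on $A$ and on $R_M$, forms a legitimate system of principal vectors for $(M,N)$, and this is where I would write the argument out in full rather than lean on the recursive supremum.
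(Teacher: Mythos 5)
Your proposal is correct and follows essentially the same route as the paper's proof: decompose $\comp=(M\cap\npp)\oplus(\mpp\cap N)\oplus R$, apply Theorem~\ref{uteq} (via its corollary) to the generic pair $(R_M,R_N)$ inside $R$, and account for the remaining principal angles as $\pi/2$'s of equal multiplicity $d=\dim(M\cap\npp)=\dim(\mpp\cap N)$. The only place you go beyond the paper is in explicitly justifying --- through the spectrum of $P_MP_NP_M$ restricted to $M$ --- that the principal angles of $(M,N)$ are the concatenation of the angles from the blocks; the paper's three-line proof takes this gluing step for granted, and your Courant--Fischer-style plan for completing it is sound.
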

\begin{proof}
  If we let $\dim (R_M \oplus R_{\mpp})=\dim(R_N \oplus R_{\npp})=r$, then $\dim (M \cap \npp) = \dim (\mpp \cap N)= p-r $.
  It follows from Theorem \ref{uteq} that $\theta_i=\eta_i$ for $i \in \{1,2,\cdots, r\}$. For $i \in \{r+1,\cdots,p\}$, $\theta_i=\eta_i=\frac{\pi}{2}.$
\end{proof}

 Let $\dim M = p$, $\dim N = q$, $\dim (M \cap N) = a$, $\dim (\mpp \cap \npp) = b$, $\dim (M \cap \npp)=c$, $\dim (\mpp \cap N)=d$, and $\dim (R_M \oplus R_{\mpp}) = \dim (R_N \oplus R_{\npp})=2r$. Then we have $p=a+c+r$ and $q=a+d+r$. Also, we can see $\dim \mpp = n-p=b+d+r$ and $\dim \npp = n-q=b+c+r$. If we assume $p \geq q$, then $\dim \mpp = n-p \leq \dim \npp = n-q$. Let $\{\theta_1, \theta_2, \cdots \, \theta_q\}$ be the principal angles between $M$ and $N$. And let $\{\eta_1, \eta_2, \cdots, \eta_{n-p}\}$ be the principal angles between $\mpp$ and $\npp$. We have the following general relationship between the two sets of principal angles.
 \begin{enumerate}
   \item $\theta_1=\cdots=\theta_a=\eta_1=\cdots=\eta_b=0$.
   \item $\theta_{a+1}=\eta_{b+1},\theta_{a+2}=\eta_{b+2},\cdots,\theta_{a+r}=\eta_{b+r}$
   \item $\theta_{a+r+1}=\cdots=\theta_{a+r+c}=\eta_{b+r+1}=\cdots=\eta_{b+r+d}=\frac{\pi}{2}$
 \end{enumerate}

\section{Numerical Range of $P_M +P_N$}
In this section we discuss the numerical range of the operators involving two orthogonal projections. Principal angles and Jordan planes can be useful in determining the numerical ranges of those operators.

If $A \in M(\comp)$, the numerical range of $A$, denoted by $W(A)$, is defined by
$$W(A)=\{\langle Ax, x \rangle : x \in \comp, \|x\|=1\}.$$ The numerical radius of $A$, $w(A)$, is defined to be $w(A)=\sup\{|\lambda| : \lambda \in W(A)\}$. The next proposition contains some standard properties of $W(A)$ and $w(A)$. A detailed description, examples, and proofs of the following standard properties of numerical range can be found in \cite{Shapiro}. 
\begin{proposition}\label{NumRan}
  If $A,B \in M(\comp)$, then the following are true.
  \begin{enumerate}
  	\item (Hausdorff-Toeplitz) $W(A)$ is convex.
    \item $W(A^*)=\{\overline{\lambda} : \lambda \in W(A) \}.$
    \item $w(A) \leq \|A\|$.
    \item $W(A)$ contains all eigenvalues of $A$.
    \item\label{ute} If $A$ and $B$ are unitarily equivalent, then $W(A)=W(B)$.
    \item If $A$ is Hermitian i.e. $A=A^*$, then $W(A) \subset \mathbb{R}$ and $w(A)=\|A\|$.
    \item\label{abi} If $\alpha,\beta \in \mathbb{C}$, then $W(\alpha A + \beta I)=\alpha W(A) + \beta$.
    \item $W(A)$ is compact.
    \item If $A=B \oplus C$, then $W(A)$ is the convex hull of $W(B) \cup W(C)$.
    
  \end{enumerate}
\end{proposition}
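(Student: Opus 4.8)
The plan is to verify each of the nine properties, observing that all but the convexity statement (1) and the direct-sum statement (9) reduce to one-line computations directly from the definition of $W(A)$, while (9) in turn depends on (1). I would therefore dispatch the routine items first and reserve convexity for the main effort.

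For the computational items: property (2) follows from $\langle A^*x, x\rangle = \overline{\langle Ax, x\rangle}$; property (3) from Cauchy--Schwarz, $|\langle Ax, x\rangle| \le \|Ax\| \le \|A\|$ for $\|x\|=1$; property (4) from $\langle Ax, x\rangle = \lambda$ whenever $Ax = \lambda x$ with $\|x\|=1$; property (5) from $\langle U^*AU x, x\rangle = \langle A(Ux), Ux\rangle$ together with the fact that $x \mapsto Ux$ permutes the unit sphere of $\comp$; property (6) from $\langle Ax, x\rangle = \overline{\langle Ax, x\rangle}$ when $A=A^*$, forcing real values, with the equality $w(A)=\|A\|$ following because the largest-modulus eigenvalue of a Hermitian operator equals its norm and lies in $W(A)$ by (4), combined with (3); property (7) from linearity of the quadratic form, $\langle (\alpha A + \beta I)x, x\rangle = \alpha\langle Ax,x\rangle + \beta$ for $\|x\|=1$; and property (8) because $W(A)$ is the continuous image of the compact unit sphere of $\comp$.

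The heart of the proposition is (1), the Toeplitz--Hausdorff theorem, and this is where I expect the main obstacle. The plan is to show that the segment joining any two points of $W(A)$ lies in $W(A)$. Given $\lambda_0 = \langle Ax_0, x_0\rangle$ and $\lambda_1 = \langle Ax_1, x_1\rangle$ with $x_0,x_1$ unit vectors, I would first use (7) to apply the affine map sending $\lambda_0 \mapsto 0$ and $\lambda_1 \mapsto 1$, so that $W(A)$ is convex if and only if its image is, reducing the problem to showing $0,1 \in W(A)$ forces $[0,1] \subseteq W(A)$. I would then compress $A$ to the (at most two-dimensional) subspace $\spa\{x_0, x_1\}$: the compression $B = P A P$, with $P$ the orthogonal projection onto that subspace, satisfies $\langle Bx,x\rangle = \langle Ax,x\rangle$ for unit $x$ in the subspace, hence $W(B) \subseteq W(A)$, and $B$ still has $0$ and $1$ in its range. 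The problem thus collapses to the two-dimensional case, where the elliptical range theorem guarantees that $W(B)$ is a (possibly degenerate) elliptical disk, hence convex, so it contains the segment $[0,1]$. The one genuinely nontrivial computation is precisely this $2\times 2$ step, namely parametrizing $\langle Bx, x\rangle$ over the unit circle of $\mathbb{C}^2$ and identifying the resulting locus as an ellipse.

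Finally, for (9), suppose $A = B \oplus C$ acting on $\hil_B \oplus \hil_C$. Writing a unit vector as $x = (y,z)$ with $\|y\|^2 + \|z\|^2 = 1$, one obtains $\langle Ax, x\rangle = \langle By, y\rangle + \langle Cz, z\rangle$, which, after normalizing $y$ and $z$, exhibits $\langle Ax,x\rangle$ as a convex combination of a point of $W(B)$ and a point of $W(C)$; this gives $W(A) \subseteq \conv(W(B) \cup W(C))$. The reverse inclusion follows by taking $z=0$ (resp.\ $y=0$) to see that $W(B), W(C) \subseteq W(A)$, and then invoking the convexity established in (1) to conclude $\conv(W(B)\cup W(C)) \subseteq W(A)$. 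This closes the argument.
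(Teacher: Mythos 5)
Your proposal is correct, but it cannot be compared line-by-line with the paper's argument for a simple reason: the paper offers no proof of this proposition at all. It states the nine properties as standard facts and defers entirely to the cited reference (Shapiro's \emph{Notes on the Numerical Range}). So what you have done is supply the content the paper outsources. Your treatment of items (2)--(8) consists of the expected one-line verifications and they are all sound, including the two-step argument for $w(A)=\|A\|$ in item (6) (spectral radius of a Hermitian matrix equals its norm, eigenvalues lie in $W(A)$ by (4), and (3) gives the reverse inequality). Your proof of (1) is the classical Toeplitz--Hausdorff argument: normalize by an affine map using (7), compress to the two-dimensional span of the two representing vectors, note $W(PAP|_{\spa\{x_0,x_1\}})\subseteq W(A)$, and invoke the elliptical range theorem. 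Worth noting: the ``one genuinely nontrivial computation'' you defer --- parametrizing the numerical range of a $2\times 2$ matrix and identifying an ellipse --- is essentially carried out later in this very paper as Lemma \ref{NRcd}, which computes $W\!\left(\begin{bmatrix} 0 & a \\ b & 0 \end{bmatrix}\right)$ explicitly; to make your step self-contained you would only need to add the standard reduction that any $2\times 2$ matrix, after subtracting half its trace, is unitarily similar to a zero-diagonal matrix whose off-diagonal entries can be made nonnegative by a diagonal unitary conjugation. Your item (9), which uses (1) to get the reverse inclusion $\conv(W(B)\cup W(C))\subseteq W(A)$, is also correct and is exactly the fact the paper uses implicitly when it splits $P_MP_N = P_{M\cap N}\oplus P_{R_M}P_{R_N}$ in Section 3. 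In short: the paper buys these facts by citation; you prove them, and your route is the standard one, dovetailing with machinery (Lemma \ref{NRcd}) that the paper develops anyway for Theorem \ref{NRGP1}.
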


\begin{lemma}\label{decomp}
  If $A \in M(\comp)$, then $W(A) \subset \{a+ib: a \in Re(A) \,\, \textrm{and}\,\, b \in Im(A)\}$, where $Re(A)=\frac{A+A^*}{2}$ and $Im(A)=\frac{A-A^*}{2i}$.
\end{lemma}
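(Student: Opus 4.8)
The plan is to exploit the Cartesian decomposition of $A$ into its Hermitian and skew-Hermitian parts. First I would record that $A=Re(A)+i\,Im(A)$, since
$$Re(A)+i\,Im(A)=\frac{A+A^*}{2}+i\cdot\frac{A-A^*}{2i}=\frac{A+A^*}{2}+\frac{A-A^*}{2}=A.$$
Next I would verify that both $Re(A)$ and $Im(A)$ are Hermitian, which is a one-line adjoint computation: $\left(\frac{A+A^*}{2}\right)^*=\frac{A+A^*}{2}$ and $\left(\frac{A-A^*}{2i}\right)^*=\frac{A^*-A}{-2i}=\frac{A-A^*}{2i}$, so $Re(A)^*=Re(A)$ and $Im(A)^*=Im(A)$.

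The central step is then to evaluate the defining quadratic form against this decomposition. Fixing a unit vector $x\in\comp$ and using linearity of the inner product in its first slot,
$$\langle Ax,x\rangle=\langle Re(A)x,x\rangle+i\,\langle Im(A)x,x\rangle.$$
I would set $a=\langle Re(A)x,x\rangle$ and $b=\langle Im(A)x,x\rangle$, so that by definition $a\in W(Re(A))$ and $b\in W(Im(A))$. Because $Re(A)$ and $Im(A)$ are Hermitian, Proposition \ref{NumRan}(6) guarantees $W(Re(A))\subset\mathbb{R}$ and $W(Im(A))\subset\mathbb{R}$, so $a$ and $b$ are genuinely real. Hence $a$ and $b$ are precisely the real and imaginary parts of $\langle Ax,x\rangle$, and $\langle Ax,x\rangle=a+ib$ with $a\in W(Re(A))$ and $b\in W(Im(A))$.

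Finally, since $x$ was an arbitrary unit vector, every point of $W(A)$ arises in this way, giving the stated containment. I do not expect any substantive obstacle: the argument is a direct computation. The only points that require care are the notational reading of ``$a\in Re(A)$'' and ``$b\in Im(A)$'' as membership in the numerical ranges $W(Re(A))$ and $W(Im(A))$, and the observation that the Hermiticity of the two parts is exactly what forces their quadratic forms to be real, so that the real/imaginary split of $\langle Ax,x\rangle$ lines up with the split of $A$ into its Hermitian components.
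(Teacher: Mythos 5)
Your proof is correct, and it is precisely the argument the paper has in mind: the paper's own proof of Lemma \ref{decomp} consists of the single word ``Straightforward,'' and your Cartesian-decomposition computation, together with the observation that Hermiticity of $Re(A)$ and $Im(A)$ forces $\langle Re(A)x,x\rangle$ and $\langle Im(A)x,x\rangle$ to be the real and imaginary parts of $\langle Ax,x\rangle$, is exactly that omitted routine argument. You were also right to flag that ``$a\in Re(A)$'' in the statement must be read as $a\in W(Re(A))$ (and likewise for $Im(A)$), which is how the lemma is actually used later for $PQ$.
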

\begin{proof}
  Straightforward.
\end{proof}

 First, we compute the numerical range of the sum of two orthogonal projections. Let $M$ and $N$ be two subspaces of $\comp$, and $P_M$ and $P_N$ be orthogonal projections onto $M$ and $N$, respectively. By Proposition \ref{NumRan}, $W(P_M+P_N)$ is a closed bounded interval in $\mathbb{R}$.  We first note that $\|P_Mx\|^2+\|P_Nx\|^2 =\langle (P_M+P_N) x, x \rangle $. An inequality showing the bounds of $\|P_Mx\|^2+\|P_Nx\|^2$ in terms of Dixmier angles can be obtained.

The following two propositions will be useful in computing $W(P_M+P_N)$.
  \begin{proposition}\label{NSP}
  If $P$ and $Q$ are orthogonal projections on $\comp$,
    $$\|P+Q\|=1+\|PQ\|.$$
  \end{proposition}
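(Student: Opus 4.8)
The plan is to use that $P+Q$ is self-adjoint and positive semidefinite, so that $\|P+Q\|=\lambda$, the largest eigenvalue of $P+Q$, attained at some unit eigenvector $x$; the whole identity then reduces to pinning down $\lambda$ in terms of $c:=\|PQ\|$. Two preliminary observations set the stage. First, $\langle(P+Q)x,x\rangle=\|Px\|^2+\|Qx\|^2\ge 0$ confirms positivity. Second, $\|PQ\|^2=\|PQP\|$, because $PQP=(QP)^*(QP)$ is positive semidefinite; hence $c^2$ is exactly the largest eigenvalue of $PQP$. I would dispatch the degenerate cases at the outset: if $P=Q=0$ the asserted identity fails, so (as is standard) I assume $P,Q\neq 0$; and if $c=0$, i.e.\ $\R P\perp\R Q$, then $P+Q$ is itself an orthogonal projection onto $\R P\oplus\R Q$, giving $\|P+Q\|=1=1+c$.

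For the upper bound $\lambda\le 1+c$ I would start from $Px+Qx=\lambda x$ and apply $P$ and $Q$ to it, obtaining the intertwining relations $PQx=(\lambda-1)Px$ and $QPx=(\lambda-1)Qx$. Feeding the second into the first gives $PQP(Px)=(\lambda-1)PQx=(\lambda-1)^2Px$. If $Px=0$, then $Qx=\lambda x$ forces $\lambda\in\{0,1\}$ and there is nothing to prove; otherwise $Px$ is an eigenvector of $PQP$ with eigenvalue $(\lambda-1)^2$, so $(\lambda-1)^2\le\|PQP\|=c^2$ and therefore $\lambda\le 1+c$.

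For the matching lower bound (when $c>0$) I would run the construction in reverse. Choose a unit eigenvector $w$ of $PQP$ for its top eigenvalue, $PQPw=c^2w$; since $c^2>0$, $w=c^{-2}PQPw\in\R P$, so $Pw=w$ and $PQw=c^2w$. Put $v:=c^{-1}Qw$. A short computation using $\langle Qw,w\rangle=\langle PQw,w\rangle=c^2$ shows that $v$ is a unit vector in $\R Q$, that $Pv=cw$ and $Qw=cv$, and that $\langle w,v\rangle=c>0$. Then $(P+Q)(w+v)=(1+c)(w+v)$ with $w+v\neq 0$, so $1+c$ is an eigenvalue of $P+Q$ and $\lambda\ge 1+c$. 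Combining the two bounds yields $\|P+Q\|=\lambda=1+c$.

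The step I expect to be the real obstacle is the upper bound, specifically getting the correct power of $c$. The tempting route---squaring via $(P+Q)^2=(P+Q)+(PQ+QP)$ and bounding $\langle(PQ+QP)x,x\rangle=2\,\mathrm{Re}\langle Px,Qx\rangle\le 2\|PQ\|$---only delivers $\lambda^2-\lambda\le 2c$, which is strictly weaker than what is needed. The decisive idea is to exploit the eigenvector structure itself through the intertwining relations, so that $Px$ is exposed as an eigenvector of $PQP$; this is what converts the crude quadratic estimate into the sharp linear bound $\lambda-1\le c$. As a consistency check one may view the whole statement through the Jordan planes of Section 1: on each plane $J_k$ the projections $P$ and $Q$ restrict to $2\times 2$ matrices, $P+Q$ has eigenvalues $1\pm\cos\theta_k$, and $\|PQ\|=\max_k\cos\theta_k$, so $\|P+Q\|=1+\max_k\cos\theta_k=1+\|PQ\|$, in agreement with the operator argument above.
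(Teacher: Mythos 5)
Your proof is correct, but there is nothing in the paper to compare it against: the paper does not prove Proposition \ref{NSP} at all, stating only that it ``can be found in \cite{Duncan} and \cite{Vidav}.'' Your argument therefore replaces a citation with a self-contained proof, and its two halves both check out. For the upper bound, applying $P$ and $Q$ to $(P+Q)x=\lambda x$ does give $PQx=(\lambda-1)Px$ and $QPx=(\lambda-1)Qx$, hence $PQP(Px)=(\lambda-1)^2Px$; so if $Px\neq0$ then $(\lambda-1)^2\le\|PQP\|=\|PQ\|^2$ (and the case $Px=0$ is correctly dismissed), yielding $\lambda\le 1+c$. For the lower bound, your vectors satisfy $Pw=w$, $Qv=v$, $Pv=cw$, $Qw=cv$, so $(P+Q)(w+v)=(1+c)(w+v)$ with $\|w+v\|^2=2+2c>0$, exhibiting $1+c$ as an eigenvalue. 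As for what each approach buys: the cited results of Duncan--Taylor and Vidav hold for projections on an arbitrary Hilbert space (indeed in a $C^*$-algebra), whereas your argument is genuinely finite-dimensional, since both bounds lean on norms being attained as eigenvalues; an infinite-dimensional version would need approximate eigenvectors in place of $x$ and $w$. In the setting $\comp$ of this paper that costs nothing, and your proof has the virtue of meshing with the paper's own machinery: your $w$ and $v$ are exactly principal vectors for the Dixmier angle $\theta_1$ (since $\langle w,v\rangle=c=\cos\theta_1$ by Proposition \ref{NPP}), and $w+v$ is the eigenvector $u_1+v_1$ with eigenvalue $1+\lambda_1$ that appears in Lemma \ref{EV}(1) and in the attainment step of Theorem \ref{BOSS}. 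Finally, your observation that the identity fails for $P=Q=0$ is a genuine, if trivial, caveat that the paper's statement overlooks; strictly the minimal hypothesis is $P+Q\neq 0$, and your $c=0$ branch already covers the case where exactly one projection vanishes.
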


   \begin{proposition}\label{NPP}
     If $P_M$ and $P_N$ are orthogonal projections onto $M$ and $N$, respectively, then $$\|P_MP_N\|=\cos (\alpha_0(M,N))=\cos \theta_1. $$
   \end{proposition}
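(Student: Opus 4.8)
The plan is to convert the operator norm into precisely the variational quantity that \emph{defines} the Dixmier angle. By definition, $\|P_MP_N\| = \sup_{\|x\|=1}\|P_MP_Nx\|$. The first step is to observe that $P_MP_Nx$ depends on $x$ only through $P_Nx \in N$: since $P_N$ is idempotent, $P_MP_Nx = P_M(P_Nx)$, while $\|P_Nx\| \le \|x\|$ with equality precisely when $x \in N$. Because the map $v \mapsto \|P_Mv\|$ is positively homogeneous, replacing $P_Nx$ by its normalization only enlarges the value, so the supremum is attained over unit vectors of $N$:
$$\|P_MP_N\| = \sup\{\|P_Mv\| : v \in N,\ \|v\|=1\}.$$

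The second step rewrites $\|P_Mv\|$ using that $P_M$ is a self-adjoint projection. For a unit vector $v \in N$, we have $\|P_Mv\| = \sup_{\|w\|=1}|\langle P_Mv, w\rangle|$, and the supremum is achieved with $w$ in the direction of $P_Mv \in M$; since $\langle P_Mv, u\rangle = \langle v, P_Mu\rangle = \langle v, u\rangle$ for $u \in M$, this gives $\|P_Mv\| = \sup\{|\langle u, v\rangle| : u \in M,\ \|u\|=1\}$. Combining the two steps yields
$$\|P_MP_N\| = \sup\{|\langle u, v\rangle| : u \in M,\ v \in N,\ \|u\|=\|v\|=1\}.$$

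The third step is to recognize the right-hand side: taking $i=1$ with $M_0=N_0=\{0\}$ in the recursive definition of the principal angles, the displayed supremum is exactly $\cos\theta_1 = \cos(\alpha_0(M,N))$, which completes the proof.

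The main obstacle is the reduction in the first step: one must carefully justify that the maximizer may be taken on the unit sphere of $N$ rather than of $\comp$, which rests on the idempotency of $P_N$ together with the homogeneity of the norm. As an alternative route for the lower bound $\|P_MP_N\| \ge \cos\theta_1$, one can feed the principal vector $v_1$ into Lemma \ref{TwoProjs}, which gives $P_Mv_1 = \cos\theta_1\, u_1$ and hence $\|P_MP_Nv_1\| = \|P_Mv_1\| = \cos\theta_1$; the reverse inequality still requires the variational characterization established above.
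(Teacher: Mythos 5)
Your proof is correct, but it is worth noting that the paper never proves Proposition \ref{NPP} at all: it is quoted from the literature, with the proof attributed to Deutsch \cite{Deutsch}. Your argument therefore supplies a self-contained proof where the paper offers only a citation. Both of your reductions are sound: the passage from $\sup_{\|x\|=1}\|P_MP_Nx\|$ to $\sup\{\|P_Mv\| : v \in N,\ \|v\|=1\}$ follows from $P_Nv=v$ for $v\in N$ (one inequality) together with $\|P_Nx\|\le\|x\|$ and homogeneity of the norm (the other); and the identity $\|P_Mv\| = \sup\{|\langle u,v\rangle| : u \in M,\ \|u\|=1\}$ follows from self-adjointness and idempotency of $P_M$, since $|\langle v,u\rangle| = |\langle P_Mv,u\rangle| \le \|P_Mv\|$ with equality at $u = P_Mv/\|P_Mv\|$ (the degenerate case $P_Mv=0$ being trivial). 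The resulting double supremum is, by the $i=1$ case of the recursive definition with $M_0=N_0=\{0\}$, exactly $\cos\theta_1=\cos(\alpha_0(M,N))$. Your closing observation is also apt: feeding $v_1$ into Lemma \ref{TwoProjs} gives the lower bound $\|P_MP_N\|\ge\cos\theta_1$ essentially for free, but the upper bound genuinely requires the variational characterization, so the two-step reduction is the real content of the argument. The only caveat is the implicit standing assumption $M,N\neq\{0\}$, without which $\theta_1$ is undefined; under that assumption your proof is complete.
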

   Proposition \ref{NSP} can be found in \cite{Duncan} and \cite{Vidav}, and Proposition \ref{NPP} in \cite{Deutsch}. By combining the two propositions, we obtain $$\|P_M+P_N\|=1+\cos \theta_1.$$

 \begin{theorem}\label{BOSS}
   Let $M$ and $N$ be two subspaces of $\comp$. 
   Let $\theta_1= \alpha_0(M,N)$ and $\eta_1=\alpha_0(\mpp,\npp)$ be the Dixmier angles between $M$ and $N$ and between $M^{\perp}$ and $N^{\perp}$, respectively. For $x \in \comp$,
   $$2\|x\|^2\sin^2\frac{\eta_1}{2} \leq \|P_Mx\|^2+\|P_Nx\|^2 \leq 2\|x\|^2\cos^2\frac{\theta_1}{2}.$$
 \end{theorem}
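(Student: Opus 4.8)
The plan is to recognize the central quantity as a numerical-range value of the Hermitian operator $P_M+P_N$ and then to bound that operator from above and below using the norm identity recorded just before the theorem. First I would observe that, since $P_M$ and $P_N$ are orthogonal projections, $\|P_Mx\|^2=\langle P_M^2x,x\rangle=\langle P_Mx,x\rangle$ and likewise for $N$, so that $\|P_Mx\|^2+\|P_Nx\|^2=\langle (P_M+P_N)x,x\rangle$. Because $P_M+P_N$ is Hermitian and positive semidefinite, its numerical range is the closed interval $[\lambda_{\min},\lambda_{\max}]$ between its smallest and largest eigenvalues, and $\lambda_{\max}=\|P_M+P_N\|$ by Proposition \ref{NumRan}. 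Consequently, for every $x$,
\[
\lambda_{\min}\|x\|^2 \le \|P_Mx\|^2+\|P_Nx\|^2 \le \lambda_{\max}\|x\|^2,
\]
so the theorem reduces to identifying $\lambda_{\min}$ and $\lambda_{\max}$.

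For the upper bound I would invoke the identity $\|P_M+P_N\|=1+\cos\theta_1$ obtained by combining Propositions \ref{NSP} and \ref{NPP}, and then rewrite $1+\cos\theta_1=2\cos^2\frac{\theta_1}{2}$ via the half-angle formula; this gives $\lambda_{\max}=2\cos^2\frac{\theta_1}{2}$ and hence the right-hand inequality.

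The more interesting half is the lower bound, where the key idea is a complementation trick. Writing $P_{M^{\perp}}=I-P_M$ and $P_{N^{\perp}}=I-P_N$, I would express $P_M+P_N=2I-(P_{M^{\perp}}+P_{N^{\perp}})$. The spectral mapping then gives $\lambda_{\min}(P_M+P_N)=2-\lambda_{\max}(P_{M^{\perp}}+P_{N^{\perp}})$, and applying the same combined norm identity to the pair $(M^{\perp},N^{\perp})$---whose Dixmier angle is exactly $\eta_1$---yields $\lambda_{\max}(P_{M^{\perp}}+P_{N^{\perp}})=\|P_{M^{\perp}}+P_{N^{\perp}}\|=1+\cos\eta_1$. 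Thus $\lambda_{\min}(P_M+P_N)=1-\cos\eta_1=2\sin^2\frac{\eta_1}{2}$, which is the left-hand inequality.

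I expect the main obstacle to be recognizing and justifying the complementation step: one must be sure that $\lambda_{\min}$ of $P_M+P_N$ is genuinely controlled by $\lambda_{\max}$ of the complementary sum (a consequence of the identity $P_M+P_N=2I-(P_{M^{\perp}}+P_{N^{\perp}})$ together with the fact that $P_{M^{\perp}}+P_{N^{\perp}}$ is positive semidefinite, so its largest eigenvalue equals its norm), and that the Dixmier angle of $(M^{\perp},N^{\perp})$ is $\eta_1$ as fixed in the statement. The remaining manipulations are the two half-angle identities, which are routine.
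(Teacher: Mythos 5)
Your proposal is correct and follows essentially the same route as the paper: both reduce the claim to the quadratic form $\langle (P_M+P_N)x,x\rangle$, get the upper bound from $\|P_M+P_N\|=1+\cos\theta_1$ (Propositions \ref{NSP} and \ref{NPP}), and get the lower bound by the complementation trick $P_M+P_N=2I-(P_{M^{\perp}}+P_{N^{\perp}})$ with the same norm identity applied to $(M^{\perp},N^{\perp})$; your spectral phrasing via $\lambda_{\min}$ and $\lambda_{\max}$ is just the eigenvalue version of the paper's direct Cauchy--Schwarz estimates. The only content the paper includes beyond yours is the verification that both bounds are attained (at $x=u_1+v_1$ and $x=s_1+t_1$), which is not needed for the stated inequalities but is used for the corollary identifying $W(P_M+P_N)$ exactly.
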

 \begin{proof}
 The right side inequality comes from the following.
   \begin{align*}
     \|P_Mx\|^2+\|P_Nx\|^2 =& \langle (P_M+P_N)x,x \rangle  \\
                           \leq & \|(P_M+P_N)x\|\|x\| \\
                           \leq & \|P_M+P_N\|\|x\|^2 \\
                           =& (1+\|P_MP_N\|)\|x\|^2  \,\,\,\, \textrm{by Proposition \ref{NSP}} \\
                           =& (1+\cos \theta_1)\|x\|^2  \,\,\,\, \textrm{by Proposition \ref{NPP}} \\
                           =& 2\cos^2\frac{\theta_1}{2} \|x\|^2.
   \end{align*}
   Next, we show that the equality can be attained for some $x$. Letting $x=u_1+v_1$,
      \begin{align*}
     \langle (P_M+P_N)x,x \rangle =& \langle (P_M+P_N)(u_1+v_1), u_1+v_1 \rangle \\
     							 =& \langle u_1+P_Mv_1+P_Nu_1+v_1, u_1+v_1 \rangle \\
                                 =& \langle u_1+\cos \theta_1 u_1+v_1+\cos \theta_1 v_1, u_1+v_1 \rangle  \,\,\,\, \textrm{by Lemma \ref{TwoProjs}} \\
                                 =& 2(1+\cos \theta_1)+2(1+\cos\theta_1)\cos\theta_1 \\
                                 =& 2(1+\cos \theta_1)(1+\cos \theta_1) \\
                                 =& (1+\cos\theta_1)\|u_1+v_1\|^2=2\cos^2 \frac{\theta_1}{2}\|x\|^2.
   \end{align*}
   
    On the other hand,
    \begin{align*}
     2\|x\|^2- (\|P_Mx\|^2+\|P_Nx\|^2) =& (\|x\|^2-\|P_Mx\|^2)+(\|x\|^2-\|P_Nx\|^2)  \\
     =& \|(I-P_M)x\|^2+\|(I-P_N)x\|^2 \\
     =& \langle (I-P_M+I-P_N)x,x \rangle \\
     \leq & \|I-P_M+I-P_N\|\|x\|^2 \\
     =& (1+\|(I-P_M)(I-P_N)\|)\|x\|^2 \\
     =& (1+\cos \eta_1)\|x\|^2 = 2\cos^2\frac{\eta_1}{2} \|x\|^2.
    \end{align*}
    Therefore, we have $$\|P_Mx\|^2+\|P_Nx\|^2 \geq 2(1-\cos^2\frac{\eta_1}{2})\|x\|^2=2\|x\|^2\sin^2\frac{\eta_1}{2}.$$
    It can be shown that the equality holds when $x=s_1+t_1$. 
 \end{proof}

\begin{cor}
If $M$ and $N$ are subspaces of $\comp$, then
  $$W(P_M+P_N)=\left[2\sin^2\frac{\eta_1}{2}, 2\cos^2\frac{\theta_1}{2}\right]\,\,\, \textrm{and} \,\,\, w(P_M+P_N)=2\cos^2\frac{\theta_1}{2}.$$
\end{cor}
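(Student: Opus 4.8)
The plan is to combine the two-sided estimate of Theorem~\ref{BOSS} with the fact that $P_M+P_N$ is Hermitian, so that its numerical range is forced to be exactly the closed interval whose endpoints are the bounds appearing in that theorem.

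First I would record that $P_M+P_N$ is self-adjoint, since each of $P_M$ and $P_N$ is an orthogonal projection and hence Hermitian. By Proposition~\ref{NumRan}(1), (6), and (8), $W(P_M+P_N)$ is therefore a nonempty, convex, compact subset of $\mathbb{R}$, i.e. a closed bounded interval $[\lambda_{\min}, \lambda_{\max}]$. I would also recall the identity, already noted before Theorem~\ref{BOSS}, that for any $x \in \comp$,
$$\langle (P_M+P_N)x, x \rangle = \|P_Mx\|^2 + \|P_Nx\|^2,$$
which converts the estimate of Theorem~\ref{BOSS} into a statement directly about the quadratic form defining $W(P_M+P_N)$.

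Next, restricting to unit vectors $x$ (so $\|x\|^2 = 1$), Theorem~\ref{BOSS} gives
$$2\sin^2\frac{\eta_1}{2} \leq \langle (P_M+P_N)x,x\rangle \leq 2\cos^2\frac{\theta_1}{2}$$
for every such $x$, which establishes the inclusion $W(P_M+P_N) \subseteq \left[2\sin^2\frac{\eta_1}{2}, 2\cos^2\frac{\theta_1}{2}\right]$. For the reverse inclusion I would invoke the attainment statements inside the proof of Theorem~\ref{BOSS}: the upper bound is achieved at (the normalization of) $x = u_1 + v_1$, and the lower bound at $x = s_1 + t_1$. Hence both endpoints lie in $W(P_M+P_N)$; since this set is a closed interval, it must contain the entire segment between them, yielding the equality $W(P_M+P_N) = \left[2\sin^2\frac{\eta_1}{2}, 2\cos^2\frac{\theta_1}{2}\right]$.

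Finally, for the numerical radius I would use Proposition~\ref{NumRan}(6), which gives $w(A) = \|A\|$ for Hermitian $A$, together with the norm computation $\|P_M+P_N\| = 1 + \cos\theta_1 = 2\cos^2\frac{\theta_1}{2}$ already derived from Propositions~\ref{NSP} and~\ref{NPP}. Alternatively, one observes that both endpoints are nonnegative and that $2\cos^2\frac{\theta_1}{2} \geq 1 \geq 2\sin^2\frac{\eta_1}{2}$ (since $\theta_1, \eta_1 \in [0,\tfrac{\pi}{2}]$), so the supremum of absolute values over the interval is the right endpoint. The one point requiring genuine care is the attainment of the left endpoint at $x = s_1+t_1$, whose verification is only asserted in Theorem~\ref{BOSS}; one must confirm that the construction of $s_1, t_1$ indeed produces a vector realizing $2\sin^2\frac{\eta_1}{2}$, paying attention to the degenerate cases $\theta_1 = 0$ or $\eta_1 = 0$ where the relevant principal and Jordan vectors collapse.
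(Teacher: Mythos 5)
Your proposal is correct and is exactly the route the paper intends: the corollary is stated without a separate proof precisely because it follows immediately from Theorem~\ref{BOSS} (the two-sided bound plus the attainment vectors $u_1+v_1$ and $s_1+t_1$), the identity $\langle (P_M+P_N)x,x\rangle=\|P_Mx\|^2+\|P_Nx\|^2$, and the fact that $W(P_M+P_N)$ is a closed interval since $P_M+P_N$ is Hermitian. Your closing caveat about the unverified attainment at $s_1+t_1$ and the degenerate cases $\theta_1=0$ or $\eta_1=0$ is a fair observation, but it concerns a gap in the paper's proof of Theorem~\ref{BOSS}, not in your argument.
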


\section{Numerical Range of Product of Projections}
In this section we describe the eigenvalues and eigenvectors of several operators involving $P_M$ and $P_N$ in terms of principal angles and principal vectors. Assume that $M$ and $N$ are in generic position with $\dim M =\dim N =p$. For the sake of convenience we write $\lambda_k= \cos \theta_k = \langle u_k, v_k \rangle = \langle s_k, t_k \rangle$ and $\mu_k= \sin \theta_k = \langle s_k, v_k \rangle = -\langle u_k, t_k \rangle$ for $k=1,\cdots,p$. Note that $\lambda_k,\mu_k \in (0,1)$ and $\lambda_k^2+\mu_k^2=1$. We will write $P$ and $Q$ in place of $P_M$ and $P_N$, respectively. The notation $EV(A)=\{(\lambda,f):\lambda \in \sigma(A) \, \textrm{and} \, Af=\lambda f\}$ will be used to denote the collection of the pairs of eigenvalues and eigenvectors of $A$.

\begin{lemma}\label{EV}
  Let $M$ and $N$ be two subspaces of $\comp$ in generic position with $\dim M =\dim N=p$. If $P=P_M$ and $Q=P_N$, then the pairs of eigenvalues and eigenvectors for the operators involving $P$ and $Q$ can be given as follows:
  \begin{enumerate}
    \item \label{P+Q} $\displaystyle EV(P+Q)=\{(1+\lambda_k,u_k+v_k), (1-\lambda_k,u_k-v_k)\}_{k=1}^p$,
    \item \label{P-Q} $\displaystyle EV(P-Q)=\{(\mu_k,u_k-\frac{1-\mu_k}{\lambda_k}v_k), (-\mu_k,u_k-\frac{1+\mu_k}{\lambda_k}v_k)\}_{k=1}^p$,
    \item \label{PQ} $EV(PQ)=\{(\lambda_k^2,u_k), (0,t_k)\}_{k=1}^p$ and $EV(QP)=\{(\lambda_k^2,v_k), (0,s_k)\}_{k=1}^p$,
    \item \label{PQ+QP} $EV(PQ+QP)=\{(\lambda_k^2+\lambda_k,u_k+v_k), (\lambda_k^2-\lambda_k,u_k-v_k)\}_{k=1}^p$,
    \item \label{PQ-QP} $EV(PQ-QP)=\{(i\lambda_k\mu_k,u_k-e^{-i\theta_k }v_k), (-i\lambda_k\mu_k,u_k-e^{i\theta_k }v_k)\}_{k=1}^p$.
  \end{enumerate}
  \end{lemma}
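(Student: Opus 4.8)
The plan is to reduce everything to a two-dimensional computation on each Jordan plane and then verify the listed eigenpairs by direct substitution. First I would record how $P$ and $Q$ act on the four orthonormal families. Since $u_k \in M$ and $v_k \in N$, we have $Pu_k = u_k$ and $Qv_k = v_k$; Lemma \ref{TwoProjs} gives $Pv_k = \lambda_k u_k$ and $Qu_k = \lambda_k v_k$; and since $s_k \in \mpp$ and $t_k \in \npp$, we get $Ps_k = 0$ and $Qt_k = 0$. These six identities, together with $\lambda_k^2 + \mu_k^2 = 1$, are the only facts the whole lemma rests on.

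Next I would observe that each Jordan plane $J_k = \spa\{u_k, v_k\} = \spa\{s_k, t_k\}$ is invariant under both $P$ and $Q$, since the displayed actions map the spanning vectors $u_k, v_k$ back into $J_k$; hence $J_k$ is invariant under every operator in the statement. Because the $J_k$ are mutually orthogonal and $\oplus_{k=1}^p J_k = \comp$, each operator $P \pm Q$, $PQ$, $QP$, and $PQ \pm QP$ splits as an orthogonal direct sum of its $2 \times 2$ restrictions to the $J_k$. Thus it suffices to diagonalize each restriction, and collecting the resulting eigenpairs over $k$ yields the full lists in (1)--(5).

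On a fixed $J_k$ the verifications are then mechanical. For (1), (3), and (4) the candidate eigenvectors are the symmetric/antisymmetric combinations $u_k \pm v_k$ (respectively $u_k$, $t_k$, $s_k$), and applying the six actions reproduces the stated eigenvalues at once; for instance $PQt_k = 0$ and $QPs_k = 0$ give the zero eigenvalues in (3). For (2) and (5) I would substitute the listed eigenvectors $u_k - \tfrac{1 \mp \mu_k}{\lambda_k} v_k$ and $u_k - e^{\mp i\theta_k} v_k$ directly into $P - Q$ and $PQ - QP$; the computation collapses after using $1 - \lambda_k^2 = \mu_k^2$ in case (2) and the identities $\lambda_k \mp i\mu_k = e^{\mp i\theta_k}$ in case (5). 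Equivalently, one may derive these eigenvectors from scratch by seeking an eigenvector of the form $u_k + c v_k$ and solving the quadratic $\lambda_k c^2 + 2c + \lambda_k = 0$ for (2) and $c^2 + 2\lambda_k c + 1 = 0$ for (5).

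The main obstacle is bookkeeping rather than anything conceptual: the natural basis $\{u_k, v_k\}$ of $J_k$ is \emph{not} orthonormal, since $\langle u_k, v_k \rangle = \lambda_k \neq 0$, so one cannot read eigenvalues off a matrix as if it were Hermitian, and the off-diagonal terms must be tracked honestly. This is exactly what forces the awkward coefficients in (2) and the complex ones in (5), and why $P - Q$ and $PQ - QP$ acquire real but nonobvious and purely imaginary eigenvalues, respectively. Direct substitution sidesteps any need to orthonormalize, so once the six actions are in hand the remaining work is routine algebra on each plane.
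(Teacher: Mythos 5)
Your proposal is correct and takes essentially the same route as the paper: both verify the listed eigenpairs by direct substitution, using only the actions $Pu_k=u_k$, $Qv_k=v_k$, $Pv_k=\lambda_k u_k$, $Qu_k=\lambda_k v_k$, $Ps_k=0$, $Qt_k=0$ together with $\lambda_k^2+\mu_k^2=1$. Your explicit observation that the mutually orthogonal Jordan planes decompose each operator into $2\times 2$ blocks (so the lists are exhaustive, not merely valid eigenpairs) is a completeness point the paper leaves implicit, but it does not alter the substance of the argument.
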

  \begin{proof}
  \begin{enumerate}
   \item Recall that $Pv_k=  \langle v_k, u_k \rangle u_k= \lambda_k u_k$ and $Qu_k =\lambda_k v_k$ from Lemma \ref{TwoProjs}.
   The result follows from  \begin{align*}
   (P+Q)(u_k+v_k)&= Pu_k+Pv_k+Qu_k+Qv_k \\
                 &=(1+\lambda_k)(u_k+v_k)   
   \end{align*}
   
   and \begin{align*}
   (P+Q)(u_k-v_k)&=Pu_k-Pv_k+Qu_k-Qv_k \\
                 &=u_k-\lambda_k u_k +\lambda_k v_k - v_k \\
                 &=(1-\lambda_k)(u_k-v_k). 
   \end{align*}
   
    \item To compute the eigenvalues and eigenvectors of $P-Q$,
     \begin{align*}
     (P-Q)\left(u_k-\frac{1-\mu_k}{\lambda_k}v_k\right) = & Pu_k-P\left(\frac{1-\mu_k}{\lambda_k}v_k\right)-Qu_k+Q\left(\frac{1-\mu_k}{\lambda_k}v_k\right)  \\
                           =& u_k-\left(1-\mu_k\right)u_k-\lambda_k v_k + \frac{1-\mu_k}{\lambda_k}v_k \\
                           =& \mu_k u_k +\left(-\lambda_k + \frac{1-\mu_k}{\lambda_k}\right) v_k \\
                           =& \mu_k u_k +\left(\frac{-\lambda_k^2+1-\mu_k}{\lambda_k}\right) v_k \\
                           =& \mu_k u_k +\left(\frac{\mu_k^2-\mu_k}{\lambda_k}\right) v_k \\
                           =& \mu_k \left(u_k +\left(\frac{\mu_k-1}{\lambda_k}\right) v_k\right) = \mu_k \left(u_k -\left(\frac{1-\mu_k}{\lambda_k}\right) v_k\right). \\
        \end{align*}
   Similarly,  $$(P-Q)\left(u_k-\frac{1+\mu_k}{\lambda_k}v_k\right) = -\mu_k \left(u_k-\frac{1+\mu_k}{\lambda_k}v_k\right).$$
  \item  Since $PQu_k=P(\lambda_k v_k)=\lambda_k^2 u_k$ and $PQt_k=0$,  $EV(PQ)=\{(\lambda_k^2,u_k), (0,t_k)\}_{k=1}^p$. Similarly, $EV(QP)=\{(\lambda_k^2,v_k), (0,s_k)\}_{k=1}^p$. 
  
  \item   Note that 
     \begin{align*}
     (PQ+QP)(u_k+v_k)=&\lambda_k^2 u_k + \lambda_k u_k + \lambda_k^2 v_k + \lambda_k v_k\\
                    =& (\lambda_k^2+\lambda_k)(u_k+v_k),
     \end{align*}
     and 
     \begin{align*}
     (PQ+QP)(u_k-v_k)=&\lambda_k^2 u_k - \lambda_k u_k - \lambda_k^2 v_k + \lambda_k v_k \\ 
                     =& (\lambda_k^2-\lambda_k)(u_k-v_k).
     \end{align*} 
    Hence, $EV(PQ+QP)=\{(\lambda_k^2+\lambda_k,u_k+v_k), (\lambda_k^2-\lambda_k,u_k-v_k)\}_{k=1}^p$.
    
    \item Compute
     \begin{align*}
       (PQ-QP)(u_k-e^{-i\theta_k}v_k) =&PQ(u_k-e^{-i\theta_k}v_k)-QP(u_k-e^{-i\theta_k}v_k) \\
       						=& P(\cos \theta_k v_k -e^{-i\theta_k}v_k)-Q(u_k-\cos \theta_k e^{-i\theta}u_k) \\
                            =& i \sin \theta_k \cos \theta_k u_k-(\cos \theta_k \sin^2 \theta_k +i \cos^2 \theta_k \sin \theta_k)v_k \\
                            =& i \sin \theta_k \cos \theta_k u_k-i \sin \theta_k \cos \theta_k e^{-i\theta_k}v_k \\
                            =& i \sin \theta_k \cos \theta_k (u_k-e^{i\theta_k}v_k) =i \mu_k \lambda_k (u_k-e^{i\theta_k}v_k).
       \end{align*}
       Similarly, we can show that $(PQ-QP)(u_k-e^{i\theta_k}v_k)=-i\mu_k \lambda_k (u_k-e^{i\theta_k}v_k).$
    \end{enumerate}
  \end{proof}

We turn our attention to determining the numerical range of $PQ$. Since 
$$PQ=\frac{PQ+QP}{2}+i\frac{PQ-QP}{2i}$$ where $\frac{PQ+QP}{2}$ and $\frac{PQ-QP}{2i}$ are Hermitian, by Lemma \ref{decomp}, we have $W(PQ) \subset \{x+iy : x \in W(\frac{PQ+QP}{2})+i W(\frac{PQ-QP}{2i})\}$. 

Next we prove that the numerical range of $PQ$ on a Jordan plane is an elliptical disc. The result is quite obvious when we consider that the range of $PQ|_{J_k}$ is a subspace of $J_k$. Hence, $PQ|_{J_k}$ can be represented in a $2 \times 2$ matrix form. The Elliptic Range Theorem states that the numerical range of such operator is an (possibly degenerate) elliptic disk. However, we want to elaborate all the steps to be able to see more details in the process. 


\begin{lemma}\label{NRcd}
  If $A=\begin{bmatrix}
    0 & a \\
    b & 0
  \end{bmatrix}$ where $0<b<a$, then $W(A)$ is an elliptic disc centered at the origin whose foci are $\pm\sqrt{ab}$, whose major axis is of length $a+b$, and the minor axis is of length $a-b$.
\end{lemma}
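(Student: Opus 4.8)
The plan is to compute $W(A)$ directly from the definition, parametrizing the unit vectors in $\mathbb{C}^2$ and identifying the resulting set as a filled ellipse. First I would write an arbitrary unit vector as $x=(x_1,x_2)$ with $|x_1|^2+|x_2|^2=1$ and compute $\langle Ax,x\rangle = a\,x_2\overline{x_1}+b\,x_1\overline{x_2}$. Writing $x_1=r_1e^{i\phi_1}$ and $x_2=r_2e^{i\phi_2}$, and setting $\rho=r_1r_2$ and $\psi=\phi_2-\phi_1$, the two cross terms become $x_2\overline{x_1}=\rho e^{i\psi}$ and $x_1\overline{x_2}=\rho e^{-i\psi}$, so the inner product collapses to
$$\langle Ax,x\rangle=\rho(a+b)\cos\psi+i\,\rho(a-b)\sin\psi.$$
Thus only two real parameters survive, the product $\rho$ and the phase difference $\psi$.

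Second, I would pin down the ranges of these parameters. The phase $\psi$ is completely free on $[0,2\pi)$, while $\rho=r_1r_2$ ranges over $[0,\tfrac12]$ as $(r_1,r_2)$ runs over the quarter-circle $r_1^2+r_2^2=1$, the maximum $\rho=\tfrac12$ being attained exactly when $r_1=r_2=\tfrac{1}{\sqrt2}$ by AM–GM. Crucially, $\rho$ and $\psi$ can be prescribed independently, so the image of the map $(\rho,\psi)\mapsto\langle Ax,x\rangle$ is exactly the set we must describe.

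Third, I would read off the geometry. Setting $X=\rho(a+b)\cos\psi$ and $Y=\rho(a-b)\sin\psi$, for each fixed $\rho$ the curve traced as $\psi$ varies is the ellipse
$$\frac{X^2}{\rho^2(a+b)^2}+\frac{Y^2}{\rho^2(a-b)^2}=1$$
centered at the origin with semi-axes $\rho(a+b)$ and $\rho(a-b)$. These are concentric, mutually similar ellipses scaled by $\rho$, so their union over $\rho\in[0,\tfrac12]$ is precisely the closed elliptic disc bounded by the largest one at $\rho=\tfrac12$. That boundary ellipse has semi-major axis $\tfrac{a+b}{2}$ along the real axis and semi-minor axis $\tfrac{a-b}{2}$ along the imaginary axis (using $a>b>0$), so the major and minor axes have lengths $a+b$ and $a-b$ respectively.

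Finally, I would locate the foci via $c^2=\bigl(\tfrac{a+b}{2}\bigr)^2-\bigl(\tfrac{a-b}{2}\bigr)^2=ab$, placing them at $\pm\sqrt{ab}$ on the real axis. The only step needing care is the filling claim in the third paragraph: one must confirm that the nested ellipses genuinely sweep out a solid disc rather than an annulus or a single curve. This is immediate, however, since $\rho$ assumes \emph{every} value in $[0,\tfrac12]$ and the ellipses are scaled copies of one another shrinking continuously to the origin as $\rho\to 0$, so every interior point lies on exactly one of them.
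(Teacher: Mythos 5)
Your proof is correct and takes essentially the same route as the paper's: both parametrize the unit vectors of $\mathbb{C}^2$, reduce $\langle Ax,x\rangle$ to two real parameters --- the product of the moduli (your $\rho$, the paper's $\tfrac{\sin 2t}{2}$) and the phase difference $\psi$ --- and identify the image as the filled ellipse with semi-axes $\tfrac{a+b}{2}$ and $\tfrac{a-b}{2}$ and foci $\pm\sqrt{ab}$. If anything, your explicit argument that the nested similar ellipses sweep out a solid disc is slightly more careful than the paper's, which asserts the filled-disc conclusion directly from the parametrized equation.
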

\begin{proof}
  Let $\xi=\colvec{(\cos t) e^{i \phi_1}}{(\sin t) e^{i \phi_2}}$ where $t \in [0, 2\pi]$ and $\phi_1, \phi_2 \in [0, 2\pi]$ be a parametrization for a unit vector in a two dimensional space complex Hilbert space. Note that

       \begin{align*}
     \langle A \xi,\xi \rangle = & \left\langle A\colvec{(\cos t) e^{i \phi_1}}{(\sin t) e^{i \phi_2}}, \colvec{(\cos t) e^{i \phi_1}}{(\sin t) e^{i \phi_2}} \right\rangle \\
                               = & a (\sin t \cos t) e^{i\phi_2} e^{-i \phi_1}+b (\cos t \sin t) e^{i\phi_1} e^{-i\phi_2} \\
                               = & \sin t \cos t \left(a e^{i(\phi_2-\phi_1)}+b e^{-i(\phi_2-\phi_1)}\right) \\
                               = & \sin t \cos t \left((a+b)\cos (\phi_2-\phi_1) +i(a-b) \sin (\phi_2-\phi_1)\right).
        \end{align*}
     Therefore, $$W(A)=\left\{ \frac{\sin (2t)}{2}\left( (a+b) \cos \phi + i (a-b) \sin \phi)\right) : t \in [0, 2\pi], \phi \in [0, 2\pi]\right\}.$$
     If $x=(a+b) \frac{\sin (2t)}{2} \cos \phi$ and $y=(a-b) \frac{\sin (2t)}{2} \sin \phi$, then we obtain the equation $$ \frac{x^2}{\left(\frac{a+b}{2}\right)}+ \frac{y^2}{\left(\frac{a-b}{2}\right)}=\sin^2(2t)$$ which is the closed elliptic disk centered at the origin with foci at $\pm\sqrt{ab}$, major axis of length $a+b$, and minor axis of length $a-b$.
\end{proof}

\begin{lemma}\label{NRGP}
  Let $P$ and $Q$ be two orthogonal projections onto two subspaces $M$ and $N$ of $\comp$ in generic position. The numerical range of $PQ$ restricted to the $k$-th Jordan plane, $W(PQ|_{J_k})$, is the elliptic disk centered at $\frac{\lambda_k^2}{2}$ with foci at $0$ and $\lambda_k^2$,the eigenvalues of $PQ$, with major axis of length $\lambda_k$, with minor axis of length $\lambda_k \mu_k$.
  In other words,
    $$W(PQ|_{J_k})=\left\{x+iy \in \mathbb{C} : \frac{\left(x-\frac{\lambda_k^2}{2}\right)^2}{\frac{\lambda_k^2}{4}} + \frac{y^2}{\frac{\lambda_k^2 \mu_k^2}{4}} \leq 1 \right\}.$$
\end{lemma}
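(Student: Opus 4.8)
The plan is to represent $PQ|_{J_k}$ as an explicit $2\times 2$ matrix and then reduce to the setting of Lemma~\ref{NRcd}. Since $\{u_k,s_k\}$ is an orthonormal basis of $J_k$ and $v_k=\lambda_k u_k+\mu_k s_k$ (because $\langle v_k,u_k\rangle=\lambda_k$ and $\langle v_k,s_k\rangle=\mu_k$ by Lemma~\ref{four vectors}), the projection $P=P_M$ fixes $u_k$ and kills $s_k$, while $Q=P_N$ acts on $J_k$ as the rank-one projection onto $\spa\{v_k\}$. Using Lemma~\ref{TwoProjs}, namely $Qu_k=\lambda_k v_k$, $Pv_k=\lambda_k u_k$, and $Qs_k=\langle s_k,v_k\rangle v_k=\mu_k v_k$, I find $PQu_k=\lambda_k^2 u_k$ and $PQs_k=\lambda_k\mu_k u_k$. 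In particular $J_k$ is $PQ$-invariant, and with respect to $\{u_k,s_k\}$ the operator $PQ|_{J_k}$ has matrix
$$A=\begin{bmatrix}\lambda_k^2 & \lambda_k\mu_k\\ 0 & 0\end{bmatrix},$$
so that $W(PQ|_{J_k})=W(A)$.

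The matrix $A$ is not yet in the anti-diagonal form demanded by Lemma~\ref{NRcd}, since its trace $\lambda_k^2$ is nonzero. I would therefore pass to
$$B=A-\tfrac{\lambda_k^2}{2}I=\begin{bmatrix}\tfrac{\lambda_k^2}{2} & \lambda_k\mu_k\\ 0 & -\tfrac{\lambda_k^2}{2}\end{bmatrix},$$
which has trace zero; by Proposition~\ref{NumRan}(\ref{abi}) we have $W(A)=W(B)+\tfrac{\lambda_k^2}{2}$, so it suffices to compute $W(B)$. Conjugating $B$ by a real rotation $R_\alpha=\begin{bmatrix}\cos\alpha & -\sin\alpha\\ \sin\alpha & \cos\alpha\end{bmatrix}$ produces the diagonal entry $\tfrac{\lambda_k^2}{2}\cos 2\alpha+\tfrac{\lambda_k\mu_k}{2}\sin 2\alpha$, which can be made to vanish by a suitable choice of $\alpha$. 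This puts $B$ into the anti-diagonal form $\left[\begin{smallmatrix}0 & a\\ b & 0\end{smallmatrix}\right]$ of Lemma~\ref{NRcd}. Because unitary conjugation preserves both the eigenvalues $\pm\tfrac{\lambda_k^2}{2}$ and the Frobenius norm, the entries satisfy $ab=\tfrac{\lambda_k^4}{4}$ and $a^2+b^2=\tfrac{\lambda_k^4}{2}+\lambda_k^2\mu_k^2$; solving yields $a+b=\lambda_k$ and $a-b=\lambda_k\mu_k$, so indeed $0<b<a$.

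At this point Lemma~\ref{NRcd}, together with the unitary invariance of the numerical range (Proposition~\ref{NumRan}(\ref{ute})), shows that $W(B)$ is the elliptic disc centered at the origin with foci $\pm\tfrac{\lambda_k^2}{2}$, major axis $a+b=\lambda_k$, and minor axis $a-b=\lambda_k\mu_k$. Translating by $\tfrac{\lambda_k^2}{2}$ recenters this disc at $\tfrac{\lambda_k^2}{2}$ and moves its foci to $0$ and $\lambda_k^2$, the eigenvalues of $PQ$, which gives exactly the inequality in the statement.

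The main obstacle is precisely the reduction to Lemma~\ref{NRcd}, which applies only to zero-diagonal matrices whereas $A$ has nonzero trace. The genuinely nonroutine step is the shift-and-rotate argument: verifying that a rotation annihilating the diagonal of $B$ exists and then pinning down the off-diagonal entries $a,b$ from the two unitary invariants. Everything else---computing the matrix $A$ and translating the resulting ellipse back---is routine bookkeeping with the properties collected in Proposition~\ref{NumRan}.
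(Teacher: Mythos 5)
Your proof is correct and follows essentially the same route as the paper's: the same matrix representation of $PQ|_{J_k}$ in the orthonormal basis $\{u_k,s_k\}$, the same shift by $\frac{\lambda_k^2}{2}I$ to a trace-zero matrix, reduction to Lemma~\ref{NRcd}, and translation back via Proposition~\ref{NumRan}~(\ref{abi}). The only difference is in the middle step, where the paper conjugates by an explicit unitary built from $u_k \mp t_k$ to reach the anti-diagonal matrix $\begin{bmatrix} 0 & \lambda_k(1+\mu_k)/2 \\ \lambda_k(1-\mu_k)/2 & 0 \end{bmatrix}$ directly, while you establish the existence of a diagonal-annihilating rotation and recover $a,b$ from the spectral and Frobenius invariants (modulo harmless sign and ordering adjustments by further trivial conjugations); both computations yield the same $a+b=\lambda_k$ and $a-b=\lambda_k\mu_k$.
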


\begin{proof}
  Note that $PQ|_{J_k}=\langle v_k, u_k \rangle u_k \otimes v_k^*= (u_k \otimes u_k^*)\cdot (v_k \otimes v_k^*) = (P|_{J_k})( Q|_{J_k})$.
  Observe that the matrix representations of $P|_{J_k}$ and $Q|_{J_k}$ with respect to the orthonormal basis $\{u_k, s_k\}$ for $J_k$ are $P|_{J_k}=\begin{bmatrix}
    1 & 0 \\
    0 & 0
  \end{bmatrix}$ and $Q|_{J_k}=\begin{bmatrix}
    \lambda_k^2 & \lambda_k \mu_k \\
    \lambda_k \mu_k & \mu_k^2
  \end{bmatrix},$ whence $$PQ|_{J_k}= \begin{bmatrix}
    \lambda_k^2 & \lambda_k \mu_k \\
    0 & 0
  \end{bmatrix}.$$  Let $A=PQ|_{J_k}-\frac{\lambda_k^2}{2}I|_{J_k}$. Then $$A= \begin{bmatrix}
    \frac{\lambda_k^2}{2} & \lambda_k \mu_k \\
    0 & -\frac{\lambda_k^2}{2}
  \end{bmatrix}.$$
  Since $\langle u_k-t_k, u_k+t_k \rangle =0, \|u_k-t_k\|=\sqrt{\langle u_k-t_k,u_k-t_k \rangle} = \sqrt{2+2\mu_k}$, and $ \|u_k+t_k\|=\sqrt{\langle u_k+t_k,u_k+t_k \rangle} = \sqrt{2-2\mu_k}$, $$U=\begin{bmatrix}
    \frac{1}{\sqrt{2+2\mu_k}}(u_k-t_k) & \frac{1}{\sqrt{2-2\mu_k}}(u_k+t_k)
  \end{bmatrix}$$ is a unitary matrix. Transforming $A$ through $U$, we have $$U^*AU=\begin{bmatrix}
    0 & \frac{\lambda_k (1+\mu_k)}{2} \\
    \frac{\lambda_k (1-\mu_k)}{2} & 0
  \end{bmatrix}.$$ Note that $0<\frac{\lambda_k (1-\mu_k)}{2}<\frac{\lambda_k (1+\mu_k)}{2}$. It follows from Lemma \ref{NRcd} and Proposition \ref{NumRan} (\ref{ute}) that $W(U^*AU)=W(A)=W(PQ|_{J_k}-\frac{\lambda_k^2}{2}I|_{J_k})$ is the elliptic disk centered at the origin whose foci are $\pm\sqrt{\frac{\lambda_k^4}{4}}=\pm\frac{\lambda_k^2}{2}$, whose major axis has length $\frac{\lambda_k (1+\mu_k)}{2}+\frac{\lambda_k (1-\mu_k)}{2}=\lambda_k$, and whose minor axis has length $\frac{\lambda_k (1+\mu_k)}{2}-\frac{\lambda_k (1-\mu_k)}{2}=\lambda_k \mu_k$. Finally, the desired result comes from $W(PQ|_{J_k})=W(A+\frac{\lambda_k^2}{2}I_{J_k})$ and Proposition \ref{NumRan} (\ref{abi}).
\end{proof}

\begin{remark}
  Two end points of the major axis ,$\frac{\lambda_k^2 \pm \lambda_k}{2}$, in $W(PQ|_{J_k})$ are eigenvalues of $\frac{PQ+QP}{2}$, and the imaginary parts of the two end points of the minor axis, $\pm \frac{\lambda_k \mu_k}{2}$, are eigenvalues of $\frac{PQ-QP}{2i}$.
\end{remark}

Let $\conv(E)$ denote the convex hull of $E \subset \mathbb{C}$.
\begin{theorem}\label{NRGP1}
  If $P$ and $Q$ are orthogonal projections onto two subspaces $M$ and $N$ of $\mathbb{C}^{2p}$ that are in generic position, respectively,
     the numerical range of $PQ$, $W(PQ)$, is the convex hull of the union of all elliptical disks, $\cup_{k=1}^p W(PQ|_{J_k})$.
     In notation,

     \begin{align*}
       W(PQ)=& \conv (\cup_{k=1}^p W(PQ|_{J_k})) \\
            =& \conv \left(\cup_{k=1}^{p} \left\{x+iy \in \mathbb{C} : \frac{\left(x-\frac{\lambda_k^2}{2}\right)^2}{\frac{\lambda_k^2}{4}} + \frac{y^2}{\frac{\lambda_k^2 \mu_k^2}{4}} \leq 1 \right\}\right).
     \end{align*}
\end{theorem}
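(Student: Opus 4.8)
The plan is to exhibit $PQ$ as an orthogonal direct sum of its restrictions to the Jordan planes and then invoke the direct-sum property of the numerical range recorded in Proposition \ref{NumRan}. The whole argument rests on one structural observation: each Jordan plane reduces both $P$ and $Q$, so the two-dimensional pieces analyzed in Lemma \ref{NRGP} assemble into the global operator without interaction.

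First I would verify that every Jordan plane $J_k$ is invariant under both $P$ and $Q$. Since $M$ and $N$ are in generic position, Lemma \ref{four bases} supplies orthonormal bases $\{u_k\}$ for $M$, $\{s_k\}$ for $\mpp$, $\{v_k\}$ for $N$, and $\{t_k\}$ for $\npp$, with $J_k=\spa\{u_k,s_k\}=\spa\{v_k,t_k\}$ and $\bigoplus_{k=1}^p J_k=\mathbb{C}^{2p}$. Hence $Pu_k=u_k$, $Ps_k=0$, $Qv_k=v_k$, and $Qt_k=0$, so $P$ and $Q$ each map $J_k$ into itself; consequently $PQ$ leaves every $J_k$ invariant. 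Because the Jordan planes are mutually orthogonal and span $\mathbb{C}^{2p}$, this yields the orthogonal decomposition
$$PQ=\bigoplus_{k=1}^p PQ\big|_{J_k}.$$

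Next I would pass to numerical ranges. Proposition \ref{NumRan}(9) asserts that $W(B\oplus C)=\conv\bigl(W(B)\cup W(C)\bigr)$ for two summands, and I would extend this to the $p$-fold sum by induction on $p$: writing $PQ=\bigl(PQ|_{J_1}\bigr)\oplus\bigl(\bigoplus_{k=2}^p PQ|_{J_k}\bigr)$ and applying the induction hypothesis to the second summand gives a convex hull nested inside another convex hull, which collapses via the elementary identity $\conv\bigl(E\cup\conv F\bigr)=\conv(E\cup F)$. This produces
$$W(PQ)=\conv\left(\bigcup_{k=1}^p W\bigl(PQ|_{J_k}\bigr)\right).$$
Finally I would substitute the explicit elliptical-disc description of each $W(PQ|_{J_k})$ from Lemma \ref{NRGP} to obtain the stated formula.

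The geometric heart of the proof—the invariance of the Jordan planes under $P$ and $Q$—is essentially immediate from the biorthogonal basis structure of Lemma \ref{four bases}, so I expect no obstacle there. The only point that genuinely requires care is the bookkeeping in the induction step that upgrades Proposition \ref{NumRan}(9) from two summands to $p$ summands; once the absorption identity $\conv\bigl(E\cup\conv F\bigr)=\conv(E\cup F)$ is isolated, that step becomes routine.
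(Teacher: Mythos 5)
Your argument is correct, but it takes a genuinely different route from the paper's. The paper never states the operator identity $PQ=\bigoplus_{k=1}^p PQ|_{J_k}$; instead it proves the two set inclusions directly. For one direction it notes that $\langle PQ|_{J_k}f,f\rangle=\langle PQf,f\rangle$ for every unit $f\in J_k$, so each $W(PQ|_{J_k})$ sits inside $W(PQ)$, and convexity of $W(PQ)$ (Hausdorff--Toeplitz) then absorbs the convex hull. For the other direction it takes an arbitrary unit vector $f=\sum_k c_k f_k$ with unit vectors $f_k\in J_k$ and $\sum_k|c_k|^2=1$, and computes $\langle PQf,f\rangle=\sum_k|c_k|^2\langle PQ|_{J_k}f_k,f_k\rangle$, exhibiting every point of $W(PQ)$ as a convex combination of points of the ellipses. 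Your route instead makes explicit the invariance of each $J_k$ under both $P$ and $Q$ (via $Pu_k=u_k$, $Ps_k=0$, $Qv_k=v_k$, $Qt_k=0$, using that $\{u_k,s_k\}$ and $\{v_k,t_k\}$ both span $J_k$), packages the operator as a $p$-fold orthogonal direct sum, and delegates the convexity bookkeeping to Proposition \ref{NumRan}(9) together with an induction resting on the absorption identity $\conv(E\cup\conv F)=\conv(E\cup F)$. What your version buys is modularity and a cleanly isolated structural fact (the reduction of $P$ and $Q$ by each Jordan plane) that the paper uses only implicitly when it writes $PQ\sum_k c_kf_k=\sum_k c_k\, PQ|_{J_k}f_k$; what the paper's version buys is self-containedness, since its explicit convex-combination computation is in effect a proof of the very direct-sum property you cite, so no extension of Proposition \ref{NumRan}(9) from two summands to $p$ is required. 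Incidentally, your argument also sidesteps a typo in the paper, where the conclusion of the second inclusion is written as $\conv(\cup_{k=1}^p W(PQ|_{J_k}))\subset W(PQ)$ when the intended conclusion is $W(PQ)\subset\conv(\cup_{k=1}^p W(PQ|_{J_k}))$.
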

\begin{proof}
  For each $f \in J_k$, $\langle PQ|_{J_k}f,f \rangle = \langle P|_{J_k}Q|_{J_k}f,f \rangle  = \langle Q|_{J_k}f,P|_{J_k}f \rangle= \langle Qf, Pf \rangle=\langle PQf, f \rangle$, which implies $\conv (\cup_{k=1}^p W(PQ|_{J_k})) \subset W(PQ)$. On the other hand, let $f$ be a unit vector in $\mathbb{C}^{2p}$. Since $\mathbb{C}^{2p}=\oplus_{k=1}^p J_k$, we can write $f= c_1 f_1+ c_2 f_2+ \cdots + c_p f_p $ for some unit vectors $f_k \in J_k$ for $k=1,...,p$, where $\sum_{k=1}^{p} |c_k|^2 =1$.  \\
   Then  \begin{align*}
       \langle PQf, f \rangle =& \left\langle PQ\sum_{k=1}^p c_k f_k, \sum_{k=1}^p c_k f_k \right\rangle \\
                              =& \left\langle \sum_{k=1}^{p}c_k (PQ|_{J_k} f_k), \sum_{k=1}^p c_k f_k \right\rangle \\
                              =& \sum_{k=1}^{p}|c_k|^2 \langle PQ|_{J_k} f_k,  f_k \rangle \in \conv (\cup_{k=1}^p W(PQ|_{J_k})),
     \end{align*}
  so $\conv (\cup_{k=1}^p W(PQ|_{J_k})) \subset W(PQ)$. 
  
  Therefore, $$\conv (\cup_{k=1}^p W(PQ|_{J_k})) = W(PQ).$$
\end{proof}

Recall that in the decomposition of $\comp$ we let $R_M=M \cap R$ and $R_N=N \cap R$, where $R$ is the orthogonal complement of $(M \cap N) \oplus (M \cap N^{\perp}) \oplus (M^{\perp} \cap N) \oplus (M^{\perp} \cap N^{\perp})$. It is easy to see that $R_M$ and $R_N$ are in generic position as two subspaces of $R$. Observe that $P_M=P_{M\cap N}+P_{M \cap N^{\perp}}+P_{R_M}$ and $P_N=P_{M\cap N}+P_{M^{\perp} \cap N}+P_{R_N}$. Therefore, the product of orthogonal projections $P_M$ and $P_N$ can be given as follows:
 \begin{align*}
 P_MP_N =& (P_{M\cap N}+P_{M^{\perp} \cap N}+P_{R_N})(P_{M\cap N}+P_{M^{\perp} \cap N}+P_{R_N})\\
        =& P_{M \cap N} + P_{R_M}P_{R_N} = P_{M \cap N} \oplus P_{R_M}P_{R_N}. \\
\end{align*}
Now we proved the general case of Theorem \ref{NRGP1}.

\begin{cor}
  If $M \cap N \neq \{0\}$, then $W(PQ)$ is the convex hull of $\cup_{k=1}^n W(PQ|_{J_k})\cup [0,1].$ In other words,
        $$W(PQ)= \conv ([0,1] \cup (\cup_{k=1}^p W(PQ|_{J_k})).$$
\end{cor}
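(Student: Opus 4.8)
The plan is to reduce the general case to the generic-position case already established in Theorem \ref{NRGP1}. The key structural fact, stated in the paragraph immediately preceding the corollary, is the orthogonal decomposition $P_MP_N = P_{M \cap N} \oplus P_{R_M}P_{R_N}$, where $R_M$ and $R_N$ sit inside $R$ in generic position. First I would invoke this decomposition so that $PQ$ splits as a direct sum of two operators: the identity $P_{M \cap N}$ acting on the $\dim(M \cap N)$-dimensional space $M \cap N$, and the product $P_{R_M}P_{R_N}$ acting on $R$, where the two subspaces are in generic position.

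Next I would compute the numerical range of each summand separately. For the summand $P_{R_M}P_{R_N}$, Theorem \ref{NRGP1} applies directly (with $R$ in place of $\mathbb{C}^{2p}$), giving $W(P_{R_M}P_{R_N}) = \conv(\cup_{k=1}^{p} W(PQ|_{J_k}))$, the convex hull of the elliptical disks coming from the Jordan planes in $R$. For the summand $P_{M \cap N}$, since it is the identity operator on a nonzero subspace, its numerical range is the single point $\{1\}$; however, because $M \cap N \neq \{0\}$ means $\theta_1 = 0$ (by property (2) of principal angles), the degenerate Jordan directions contribute the eigenvalues $0$ and $1$, so the relevant numerical range over these flat directions is the segment $[0,1]$.

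Then I would combine the pieces using Proposition \ref{NumRan} (9): if $A = B \oplus C$, then $W(A)$ is the convex hull of $W(B) \cup W(C)$. Applying this to $PQ = P_{M \cap N} \oplus P_{R_M}P_{R_N}$ yields
\begin{align*}
  W(PQ) &= \conv\left(W(P_{M \cap N}) \cup W(P_{R_M}P_{R_N})\right) \\
        &= \conv\left([0,1] \cup \left(\cup_{k=1}^{p} W(PQ|_{J_k})\right)\right),
\end{align*}
which is precisely the claimed identity. The convex hull of a union of convex hulls equals the convex hull of the union, so the nesting of hulls collapses cleanly.

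The main obstacle I anticipate is justifying the $[0,1]$ contribution rather than the naive point $\{1\}$. The operator $P_{M \cap N}$ by itself only contributes the eigenvalue $1$, so one must be careful to account for the full action of $PQ$ on directions lying in $M \cap N$ together with their orthogonal partners $M^\perp \cap N^\perp$: these span degenerate Jordan planes on which $PQ$ has eigenvalues $0$ and $1$, and whose numerical range is the segment $[0,1]$ by the one-dimensional analogue of Lemma \ref{NRGP}. I would therefore want to state explicitly that $M \cap N \neq \{0\}$ forces at least one degenerate pair, so that the point $0$ is included and the segment—not merely the endpoint $1$—enters the convex hull. Once this bookkeeping is pinned down, the remaining steps are routine applications of the cited propositions.
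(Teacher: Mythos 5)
Your route is the same as the paper's: the paper obtains this corollary from the decomposition $P_MP_N=P_{M\cap N}\oplus P_{R_M}P_{R_N}$ derived in the paragraph immediately preceding the statement, applies Theorem \ref{NRGP1} to the generic pair $(R_M,R_N)$ inside $R$, and combines the two pieces with the direct-sum property in Proposition \ref{NumRan}; you do exactly this. The gap lies in the step you yourself flag as the main obstacle. Your claim that $M\cap N\neq\{0\}$ forces at least one ``degenerate pair'' matching a direction of $M\cap N$ with a direction of $M^{\perp}\cap N^{\perp}$ is false. In $\mathbb{C}^{3}$ take $M=\spa(e_1,e_2)$ and $N=\spa\bigl(e_1,\tfrac{1}{\sqrt{2}}(e_2+e_3)\bigr)$: then $M\cap N=\spa(e_1)\neq\{0\}$ while $M\cap N^{\perp}=M^{\perp}\cap N=M^{\perp}\cap N^{\perp}=\{0\}$, so no such pair exists; the summand of $PQ$ complementary to $R$ is the identity on $M\cap N$, and its numerical range is $\{1\}$, not $[0,1]$.

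The correct bookkeeping is this: the first summand acts on $R^{\perp}=(M\cap N)\oplus(M\cap N^{\perp})\oplus(M^{\perp}\cap N)\oplus(M^{\perp}\cap N^{\perp})$, where $PQ$ coincides with the orthogonal projection $P_{M\cap N}$. That operator is a \emph{proper} projection, hence has numerical range $[0,1]$, precisely when at least one of the last three summands is nonzero. When all three vanish, as in the example above, its contribution is only $\{1\}$, and the needed point $0$ must instead come from the generic part: by Lemma \ref{NRGP} the point $0$ is a focus of every elliptical disk $W(PQ|_{J_k})$, so as long as $R\neq\{0\}$ the convex hull of $\{1\}\cup\bigl(\cup_{k}W(PQ|_{J_k})\bigr)$ already contains $[0,1]$ and the stated formula holds. (In the one remaining case, $R=\{0\}$ with $R^{\perp}=M\cap N$, i.e.\ $M=N=\comp$, the formula genuinely fails, since then $W(PQ)=\{1\}$; the paper is silent on this degenerate case as well.) With this case analysis substituted for your ``degenerate pair'' claim, your argument is complete and coincides with the paper's.
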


\end{document}